\newtheorem{theorem}{Theorem}[section]
\newtheorem{lemma}[theorem]{Lemma}
\newtheorem{proposition}[theorem]{Proposition}
\newtheorem{cor}[theorem]{Corollary}
\theoremstyle{definition}
\newtheorem{example}[theorem]{Example}
\newtheorem{remark}[theorem]{Remark}
\numberwithin{equation}{section}
\newcommand{\C}{\mathbb{C}}
\newcommand{\N}{\mathbb{N}}
\newcommand{\Lin}{\mathcal{L}}
\newcommand{\Ho}{\mathcal{H}}
\newcommand{\Hb}{\mathcal{H}_b}
\newcommand{\M}{\mathfrak{M}}
\newcommand{\A}{\mathcal{A}}
\newcommand{\Pol}{\mathcal{P}}
\newcommand{\nn}[1]{{\left\vert\kern-0.25ex\left\vert\kern-0.25ex\left\vert #1 
		\right\vert\kern-0.25ex\right\vert\kern-0.25ex\right\vert}}
\renewcommand{\geq}{\geqslant}
\renewcommand{\leq}{\leqslant}
\newcommand{\NA}{\operatorname{NA}}
\newcommand{\pten}{\ensuremath{\widehat{\otimes}_\pi}}
\newcommand{\eps}{\varepsilon}
\newcommand{\Nsten}{\widehat{\otimes}_{N, s, \pi}\,}
\begin{document}


\title[Daugavet property for Banach algebras...]{Daugavet property of Banach algebras of holomorphic functions and norm-attaining holomorphic functions}

\author[Jung]{Mingu Jung}
\address[Jung]{School of Mathematics, Korea Institute for Advanced Study, 02455 Seoul, Republic of Korea \newline
\href{http://orcid.org/0000-0003-2240-2855}{ORCID: \texttt{0000-0003-2240-2855} }}
\email{\texttt{jmingoo@kias.re.kr}}
\urladdr{\url{https://clemg.blog/}}

\thanks{The author was supported by NRF (NRF-2019R1A2C1003857), by POSTECH Basic Science Research Institute Grant (NRF-2021R1A6A1A10042944) and by a KIAS Individual Grant [MG086601] at Korea Institute for Advanced Study.}

\subjclass[2020]{Primary: 46E50; Secondary: 46G20, 46G25}

\date{\today}

\keywords{Daugavet property; Holomorphic functions; Norm attaining; Compact approximation property}

\begin{abstract}
We show that the duals of Banach algebras of scalar-valued bounded holomorphic functions on the open unit ball $B_E$  of a Banach space $E$ lack weak$^*$-strongly exposed points. Consequently, we obtain that some Banach algebras of holomorphic functions on an arbitrary Banach space have the Daugavet property which extends the observation of P.~Wojtaszczyk \cite{Wo}. Moreover, we present a new denseness result by proving that the set of norm-attaining vector-valued holomorphic functions on the open unit ball of a dual Banach space is dense provided that its predual space has the metric $\pi$-property. Besides, we obtain several equivalent statements for the Banach space of vector-valued homogeneous polynomials to be reflexive, which improves the result of J.~Mujica \cite{Mujica1}, J.~A.~Jaramillo and L.~A.~Moraes \cite{JM}. As a byproduct, we generalize some results on polynomial reflexivity due to J.~Farmer \cite{Farmer94}. 
\end{abstract}
\maketitle


\section{Introduction}

Throughout this paper, the letters $E$ and $F$ represent complex Banach spaces. Let us denote by $B_E$ the \textbf{open} unit ball and by $\overline{B}_E$ the \textbf{closed} unit ball. Using the classical notation, let us denote by $\Pol(^N E, F)$ the Banach space of all $N$-homogeneous continuous polynomials from $E$ to $F$. In particular, $\Pol (^1 E, F)$ is the space of all bounded linear operators from $E$ to $F$, which we will denote by $\Lin (E, F)$. It is well-known that the space $\Ho^\infty (B_E; F)$ of bounded holomorphic functions on $B_E$ into $F$ becomes a Banach space when it is endowed with the norm $\| f \| = \sup_{x \in B_E} \| f (x) \|$. 
When $F = \mathbb{C}$, then we write simply $\Pol (^N E)$ and $\Ho^\infty (B_E)$ instead of $\Pol(^N E; \mathbb{C})$ and $\Ho^\infty (B_E; \mathbb{C})$, respectively. It is well-known that $\Pol(^N E)$ and $\Ho^\infty (B_E)$ are dual Banach spaces. Indeed, recall the following result due to R.~Ryan \cite{Ryan_thesis}: For each Banach space $F$ and each $P$ in $\Pol(^N E; F)$, there is an unique operator $T_P \in \Lin (\Nsten E, F)$ such that $T_P \circ q_N = P$, where $q_N \in \Pol(^N E; \, \Nsten E)$ is given by $q_N (x) = \otimes^N x := x \otimes \cdots \otimes x$, and the correspondence $P \in \Pol(^N E, F) \longleftrightarrow T_P \in \Lin (\Nsten E, F)$ is an isometric isomorphism. Here, $\Nsten E$ denotes the $N$-fold (completed) symmetric tensor product of $E$. For bounded holomorphic mappings, J.~Mujica observed in \cite{Mujica} that, given a Banach space $E$, there are a (unique) Banach space $G^\infty (E)$ and a (norm one) mapping $g_E \in \Ho^\infty (B_E; G^\infty(E))$ satisfying the following universal property: For each Banach space $F$ and each $f \in \Ho^\infty (B_E; F)$, there exists a unique $T_f \in \Lin (G^\infty (E), F)$ such that $T_f \circ g_{E} = f$, and the correspondence $f \in \Ho^\infty (B_E; F) \longleftrightarrow T_f \in \Lin (G^\infty (E), F)$ is an isometric isomorphism. Explicitly, $G^\infty (E)$ is given as 
\[
\left\{ \varphi \in \Ho^\infty (B_E)^* \,: \, \varphi \vert_{(B_{\Ho^\infty (B_E)}, \tau_c)} \text{ is continuous} \right\},
\]
where $\tau_c$ is the compact-open topology.

One of the main motivations for the present work stems from the paper \cite[Theorem 3.1]{AAGM}, where the authors, arguing as in \cite[Theorem 2.7]{CK96}, obtained that if a Banach space $E$ the Radon-Nikod\'ym property (for short, RNP), then for every $N \in \N$, every $\eps >0$ and every $f \in \A_u (B_E; F)$, there exists $P \in \Pol(^N E; F)$ such that $\|P\| < \eps$ and $f + P$ attains its norm for every Banach space $F$. Here, $\A_u (B_E; F)$ denotes the closed subspace of $\Ho^\infty (B_E; F)$ consisting of elements which are uniformly continuous on $B_E$. Recall that an element $f$ in $\A_u (B_E; F)$ (or in $\Pol(^N E; F)$) is said to \emph{attain its norm} if there exists $x_0 \in \overline{B}_E$ such that $\| f (x_0) \| = \| f \|$. One of direct consequences of Mujica's linearization result shows that if $f \in \A_u ( B_E; F) \subset \Ho^\infty (B_E; F)$ attains its norm at some $x_0$, then its corresponding bounded linear operator $T_f \in \Lin (G^\infty (E), F)$ also attains its norm at $g_E (x_0)$. 
Thus, it can be seen that the RNP of a Banach space $E$ is somehow related to the denseness of norm-attaining elements in $\Lin (G^\infty (E), F)$ (for any Banach space $F$). Motivated by this relation and the work of J.~Bourgain \cite{Bourgain} on the Bishop-Phelps property, one may ask if the Banach space $G^\infty (E)$ has the RNP under some assumptions on $E$.

We proceed to describe the content of the paper. In Section \ref{denseness}, we prove  that dual spaces of $\Ho^\infty (B_E)$ and $\A_u (B_E)$ do not have weak$^*$-strongly exposed points (see Theorem \ref{theorem:predual_no_strongly_exposed_points}). This result, in particular, shows that the Banach space $G^\infty (E)$ fails to have the RNP for every Banach space $E$ (see Corollary \ref{cor:fails_RNP}). Furthermore, we obtain that $\Ho^\infty (V)$ and $\A_u (V)$ have the Daugavet property for any bounded open balanced convex subset $V$ of a Banach space $E$ (see Corollary \ref{cor:DP_holo}). This can be viewed as a generalization of the fact that $\Ho^\infty (V)$ or $\A_u  (V)$ has the Daugavet property whenever $V$ is an open and connected subset of $\mathbb{C}^N$, which was observed by P.~Wojtaszczyk \cite{Wo}. In Section \ref{sec:NA}, we give a different proof of the denseness of strongly norm-attaining $N$-homogeneous polynomials from $E$ into $F$ with the aid of  some tensor product theory provided that $E$ is a dual Banach space with the RNP and the approximation property. We also present a new denseness result which can be seen as weak$^*$-version of \cite[Theorem 3.3]{AAGM} by proving the denseness of norm-attaining weak$^*$ uniformly continuous holomorphic functions on a dual Banach space provided that the predual of the given Banach space has the metric $\pi$-property (see Theorem \ref{thm:metric_pi}). In Section \ref{Holub}, we focus on the reflexivity of the Banach space $\Pol(^N E; F)$ which is closely related to the existence of non-norm-attaining elements in $\Pol(^N E; F)$. We generalize in Theorem \ref{thm:holub_poly} the result \cite[Theorem 3.1]{Mujica1} of J.~Mujica on the reflexivity of $\Pol(^N E; F)$ by proving that for a reflexive Banach space $E$ with compact approximation property and an \emph{arbitrary} Banach space $F$ the following holds: $\Pol (^N E; F) = \Pol_w(^N E; F)$ $\Longleftrightarrow$ $\Pol (^N E; F) = \NA \Pol(^N E; F)$ $\Longleftrightarrow$ $(\Pol(^N E; F), \| \cdot \|)^* = (\Pol(^N E; F), \tau_c)^*$ $\Longleftarrow$ $\Pol (^N E; F)$ is reflexive, where $\NA \Pol(^N E; F)$ denotes the set of all norm-attaining $N$-homogeneous polynomials between $E$ and $F$ and $\tau_c$ is the compact-open topology. Furthermore, we also improve the result \cite[Theorem 1.5]{JM} of J.~A.~Jaramillo and L.~A.~Moraes on the reflexivity of the space of entire functions of bounded type (see Theorem \ref{thm:holub_holo}). As a byproduct of the results just mentioned, we observe in Section \ref{Farmer} that some results on polynomial reflexivity of $E$ in \cite{Farmer94} can be improved. For instance, we obtain an equivalent statement for every $N$-homogeneous scalar-valued polynomial on $E$ to be weakly sequentially continuous provided that $E$ is a reflexive Banach space with the compact approximation property (see Theorem \ref{thm:gen_Farmer_1}). 

\section{Daugavet property of Banach algebras of holomorphic functions}\label{denseness}






In this section, we will observe that the dual $\mathcal{A}^*$ of a certain Banach algebra $\A$ of bounded holomorphic functions do not have weak$^*$-strongly exposed points. Let us first explain some notation and definitions. First, recall that a unit vector $x$ in a Banach space $E$ is \emph{strongly exposed} if there is a unit vector $x^* \in E^*$ so that $x^* (x) = 1$ and given any sequence $(x_n) \subseteq B_E$ with $\text{Re}\ x^* (x_n) \rightarrow 1$ we can conclude that $x_n$ converges to $x$ in norm. If $E$ is a dual space and the element $x^*$ which strongly exposes at $x$ is in the predual of $E$, then we say that $x$ is \emph{weak$^*$-strongly exposed}. Next, we shall denote by $\A_\infty (B_E)$ the Banach algebra 
\[
\{ f \in \Ho^\infty (B_E) : f \text{ extends to the closed unit ball } \overline{B}_E \text{ continuously}\}. 
\]
It is known that $\A_u (B_E) \subset \A_\infty (B_E) \subset \Ho^\infty (B_E)$ while each inclusion is strict in general (see \cite[Section 12]{ACG91}). 
Notice that $\A_u (V), \A_\infty (V)$ and $\Ho^\infty (V)$ also can be defined in an obvious way for a bounded convex open subset $V$ of a Banach space $E$, and each of them is a Banach algebra when it is endowed with the supremum norm on $V$.

When $\mathcal{A}$ denotes either $\Ho^\infty (B_E)$, $\A_\infty (B_E)$ or $\A_u (B_E)$, we denote by $\M ( \mathcal{A})$ the \emph{spectrum} of the algebra $\mathcal{A}$ which consists of all homomorphisms, i.e., all continuous linear functionals on $\mathcal{A}$ that are also multiplicative. Let us notice that the spectrum $\M(\A)$ is always a compact set endowed with the Gelfand topology (that is, the restriction of the weak$^*$-topology of $\A^*$). 
We shall borrow some results from \cite{ADLM} on Gleason parts of the spectrum of the algebras of holomorphic functions. Given $\varphi$ and $\psi$ in $\M (\A)$, the distance $\| \varphi - \psi \| = \sup\{ |\varphi (f)-\psi(f)| : f \in \A, \, \| f \| \leq 1\}$ is called the \emph{Gleason metric}, and the following relation is well-known (see \cite[Theorem 2.8]{Bear}):
\[
\| \varphi -\psi \| = \frac{2-2 \sqrt{1-\rho(\varphi,\psi)^2}}{\rho(\varphi, \psi)},
\]
where $\rho$ is the \emph{pseudo-hyperbolic distance} i.e., $\rho (\varphi, \psi) := \sup \{ |\varphi(f)| : f \in \A, \, \| f \| \leq 1, \, \psi(f)=0\}$. For a more detailed account on the Gleason metric on spectra, we refer to \cite{ADLM, Bear} and the references therein. 

\begin{theorem}\label{theorem:predual_no_strongly_exposed_points}
Let $E$ be a Banach space, and let $\mathcal{A}$ denote $\A_u (B_E), \A_\infty (B_E)$ or $\Ho^\infty (B_E)$. Then $\A^*$ does not have weak$^*$-strongly exposed points.  
\end{theorem}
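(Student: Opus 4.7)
The plan is to argue by contradiction. Suppose $\varphi \in \A^*$ is weak$^*$-strongly exposed by some $f \in \A$ with $\|f\| = 1$ and $\varphi(f) = 1$. The idea is to produce sequences $(x_n), (y_n) \subset B_E$ such that $f(x_n), f(y_n) \to 1$, yet $\|\delta_{x_n} - \delta_{y_n}\|_{\A^*}$ stays bounded below by a positive constant. Since $\delta_{x_n}, \delta_{y_n}$ lie in $B_{\A^*}$ with $\re\delta_{x_n}(f), \re\delta_{y_n}(f) \to 1$, the weak$^*$-strongly exposing hypothesis forces both sequences to converge to $\varphi$ in norm, contradicting the lower bound. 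Replacing the pair $(f, \varphi)$ by $(\bar\zeta f, \zeta \varphi)$ for a suitable $\zeta \in \mathbb{T}$, we may assume there exists $(x_n) \subset B_E$ with $f(x_n) \to 1$; further assuming $f$ is non-constant (a constant $f$ clearly yields no weak$^*$-strongly exposed point), the maximum modulus principle gives $\alpha_n := \|x_n\| \to 1$. Using Hahn--Banach, pick $L_n \in E^*$ with $\|L_n\| = 1$ and $L_n(x_n) = \alpha_n$, and then $v_n \in E$ with $\|v_n\| = 1$ and $L_n(v_n) \in \R$ satisfying $L_n(v_n) > 1 - 1/n$ (after possibly rotating $v_n$ by a unimodular scalar). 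Set
\[
t_n := \frac{1 - \alpha_n^2}{2 + \alpha_n}, \qquad y_n := x_n + t_n v_n,
\]
so that $t_n < 1 - \alpha_n$ ensures $y_n \in B_E$.

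To verify $f(y_n) \to 1$, restrict $f$ to the complex line through $x_n$ in direction $v_n$: the function $g_n(\lambda) := f(x_n + \lambda v_n)$ is bounded holomorphic on the disk $\mathbb{D}_{1-\alpha_n}$ of radius $1-\alpha_n$ with $\|g_n\|_\infty \leq 1$ and $g_n(0) = f(x_n) \to 1$. Applying Schwarz--Pick on this disk,
\[
\rho_{\mathbb{D}}(g_n(0), g_n(t_n)) \leq \rho_{\mathbb{D}_{1-\alpha_n}}(0, t_n) = \frac{t_n}{1 - \alpha_n} = \frac{1 + \alpha_n}{2 + \alpha_n} \longrightarrow \frac{2}{3}.
\]
Any subsequential accumulation point $w \neq 1$ of $(g_n(t_n))$ would force the left-hand side to tend to $1$ (since $\rho_{\mathbb{D}}(a, b) \to 1$ whenever $a \to 1$ and $b \to w \in \overline{\mathbb{D}} \setminus \{1\}$), contradicting the bound; hence $f(y_n) = g_n(t_n) \to 1$.

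For the norm separation, let $\phi_n(w) := (w - \alpha_n)/(1 - \alpha_n w)$ be the M\"obius automorphism of $\mathbb{D}$ vanishing at $\alpha_n$, and set $h_n := \phi_n \circ L_n$. Since $\alpha_n < 1$, $\phi_n$ extends holomorphically across $\overline{\mathbb{D}}$; combined with the continuity of $L_n$ on $\overline{B}_E$, this places $h_n$ in $\A_u(B_E) \subset \A_\infty(B_E) \subset \Ho^\infty(B_E)$ with $\|h_n\| \leq 1$ and $h_n(x_n) = \phi_n(\alpha_n) = 0$. Using $L_n(y_n) = \alpha_n + t_n L_n(v_n)$ together with the choice of $t_n$, a direct computation yields $|h_n(y_n)| \to 1/2$, so the pseudo-hyperbolic distance satisfies $\rho(\delta_{x_n}, \delta_{y_n}) \geq |h_n(y_n)|$, and the Gleason metric formula recalled above then gives $\|\delta_{x_n} - \delta_{y_n}\|_{\A^*} \to 4 - 2\sqrt{3} > 0$, completing the contradiction. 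The main obstacle I anticipate is the tight balancing built into the choice of $t_n$: both the displacement $t_n$ and the safety margin $1 - \alpha_n$ vanish as $n \to \infty$, and the calibration $t_n = (1 - \alpha_n^2)/(2 + \alpha_n)$ is engineered so that $t_n/(1-\alpha_n) \to 2/3 < 1$ (feeding Schwarz--Pick) while simultaneously $\phi_n(\alpha_n + t_n) = 1/2$ on the nose (feeding the Gleason lower bound); moreover $h_n$ is assembled from purely linear and M\"obius pieces, so it automatically belongs to even the smallest algebra $\A_u(B_E)$, ensuring the argument covers all three cases $\A \in \{\A_u(B_E), \A_\infty(B_E), \Ho^\infty(B_E)\}$ simultaneously.
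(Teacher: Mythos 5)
Your proof is correct, but it takes a genuinely different route from the paper's. The paper uses the exposing functional once: from $|f_0(x_n)|\to 1$ it deduces (after a rotation) that $\delta_{x_n}\to\theta\varphi_0$ in norm, hence that $(x_n)$ is norm-Cauchy with limit $x_0\in S_E$, identifies $\theta\varphi_0$ restricted to $\A_u(B_E)$ with $\delta_{x_0}$ via the Gelfand topology, and then invokes \cite[Proposition 1.1]{ADLM} (a point of $B_E$ and a point of $S_E$ lie in different Gleason parts, so $\|\delta_{x_n}-\delta_{x_0}\|=2$) to reach a contradiction. You instead construct by hand two almost-maximizing sequences $(\delta_{x_n})$ and $(\delta_{y_n})$ in the weak$^*$ slice determined by $f$ that remain uniformly separated in $\A^*$-norm: Schwarz--Pick on the complex line $\lambda\mapsto x_n+\lambda v_n$ keeps $f(y_n)\to 1$, while the test functions $h_n=\phi_n\circ L_n$, which lie in $\A_u(B_E)$ and hence in all three algebras, give the separation. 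This buys self-containedness (no appeal to the Gleason-part result of \cite{ADLM}, nor to the Gelfand-topology identification of the limit) and in fact yields the quantitatively stronger statement that every weak$^*$ slice of $B_{\A^*}$ generated by a norm-one $f\in\A$ has diameter bounded below by a universal constant, which is precisely the kind of input that Corollary \ref{cor:DP_holo} exploits; the paper's proof is shorter given the cited machinery. Two cosmetic points. First, the assertion that ``the maximum modulus principle gives $\alpha_n\to 1$'' really rests on the quantitative estimate $\sup_{\|x\|\le r}|f(x)|\le (|f(0)|+r)/(1+|f(0)|r)<1$ for $r<1$ (Schwarz--Pick on lines through the origin), not on the bare maximum principle; as it happens, nothing downstream uses $\alpha_n\to 1$, since your choice $t_n=(1-\alpha_n^2)/(2+\alpha_n)$ and the computation $\phi_n(\alpha_n+t_nL_n(v_n))=L_n(v_n)/\bigl(2+\alpha_n(1-L_n(v_n))\bigr)\to 1/2$ work for any $\alpha_n\in[0,1)$. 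Second, the claimed limit $\|\delta_{x_n}-\delta_{y_n}\|\to 4-2\sqrt{3}$ should be a lower bound on the $\liminf$ (the pseudo-hyperbolic distance could exceed $|h_n(y_n)|$); alternatively one can bypass the Gleason formula entirely, since $\|\delta_{x_n}-\delta_{y_n}\|\ge |h_n(x_n)-h_n(y_n)|=|h_n(y_n)|\to 1/2$ already suffices for the contradiction.
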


\begin{proof}
Assume that there is a weak$^*$-strongly exposed point $\varphi_0 \in \A^*$ with $\| \varphi_0 \| =1$ and let $f_0 \in \A$ be a norm one element which strongly exposes at $\varphi_0$. 
Notice that the element $f_0$ is not a constant function. 
Take a sequence $(x_n)$ in $B_E$ so that $|f_0 (x_n)| \rightarrow \| f_0 \| = 1$. 
By our assumption, there exists $\theta \in \mathbb{C}$ with $|\theta| = 1$ such that $\delta_{x_n} \rightarrow \theta \varphi_0$ in $\A^*$ in the norm topology. 
Notice that $\| x_n - x_m \| \leq \| \delta_{x_n} - \delta_{x_m}\|$ for any $n, m \in \N$ as the dual $E^*$ can be viewed as a subspace of $\A$. Thus, the sequence $(x_n)$ in $B_E$ is a Cauchy sequence. Let $x_0 \in \overline{B}_E$ be such that $x_n \rightarrow x_0$. If $x_0$ belongs to the interior $B_E$, then $\| f_0 \| = | f_0 (x_0) |$ and the Maximum Principle yields that $f_0$ must be constant (see \cite[Proposition 5.9]{Mujica}) which is a contradiction. Therefore, we get that $x_0 \in S_E$. 

On the other hand, since it is clear that $\delta_{x_n}$ is multiplicative for each $n\in \N$, we can actually view these elements as members of $\M (\A)$. If we put $\psi = \theta \varphi_0$, then $\psi$ is also an element of $\M (\A)$ as it is multiplicative as well (as a limit of point evaluation homomorphisms). Consider the canonical restriction mapping $\Gamma : \M (\A) \rightarrow \M (\A_u (B_E))$, i.e., $\Gamma(\varphi) = \varphi \vert_{\A_u (B_E)}$ for every $\varphi \in \M(\A)$. Note that $\Gamma (\delta_{x})$ is again an evaluation homomorphism on $\A_u (B_E)$; thus we keep the notation $\delta_x$ for each $x \in B_E$. Then we have that $\| \delta_{x_n} - \Gamma (\psi) \|_{\A_u (B_E)} \leq \| \delta_{x_n} - \psi \|_{\A} \rightarrow 0$ as $n \rightarrow \infty$. On the other hand, if $g \in \A_u (B_E)$ is given, then $\delta_{x_n} (g) = g(x_n) \rightarrow g(x_0) = \delta_{x_0} (g)$; which implies that $\delta_{x_n} \rightarrow \delta_{x_0}$ in $\M (\A_u (B_E))$ in the Gelfand topology. It follows that $\Gamma (\psi) = \delta_{x_0}$. However, \cite[Proposition 1.1]{ADLM} implies that $\| \delta_{x_n} - \delta_{x_0} \|_{\mathcal{A}_u (B_E)} = 2$ for all $n \in \N$, which is a contradiction. 
\end{proof}


Recall that a Banach space $E$ has the Radon-Nikod\'ym property (for short, RNP) if and only if every bounded closed convex subset of $E$ is the closed convex hull of its strongly exposed point \cite{Phelps}. Noting that the above proof of Theorem \ref{theorem:predual_no_strongly_exposed_points} shows that $G^\infty (E)$, the predual of the Banach space $\Ho^\infty (B_E)$, lacks strongly exposed points, we have the following corollary. 

\begin{cor}\label{cor:fails_RNP}
For any Banach space $E$, $G^\infty (E)$ does not have strongly exposed points; hence $G^\infty(E)$ fails to have the RNP. 
\end{cor}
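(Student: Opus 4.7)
The plan is to observe that the proof of Theorem \ref{theorem:predual_no_strongly_exposed_points}, written with $\A = \Ho^\infty(B_E)$, already produces all the required contradictions using sequences of point evaluations $\delta_{x_n}$ which lie in $G^\infty(E) \subset \A^*$. Once we know $G^\infty(E)$ has no strongly exposed points, the failure of the RNP is immediate from the Phelps characterization recalled just above the corollary.

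More precisely, suppose for contradiction that $\varphi_0 \in G^\infty(E)$ with $\|\varphi_0\|=1$ is strongly exposed in $G^\infty(E)$ by some norm one $f_0 \in G^\infty(E)^* = \Ho^\infty(B_E)$. I would first remark that for every $x \in B_E$, the evaluation $\delta_x$ belongs to $G^\infty(E)$ because $\tau_c$-convergence on the unit ball of $\Ho^\infty(B_E)$ implies pointwise convergence at $x$, and that $\|\delta_x\|\leq 1$. Since $f_0$ cannot be constant (otherwise it exposes nothing), I would pick a sequence $(x_n) \subset B_E$ with $|f_0(x_n)|\to 1$, and, after multiplying $f_0$ by a suitable unimodular scalar, arrange $\operatorname{Re} f_0(x_n) \to 1$. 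Then $\delta_{x_n} \in B_{G^\infty(E)}$ satisfies $\operatorname{Re} \delta_{x_n}(f_0) \to 1$, so the strong exposure hypothesis yields $\delta_{x_n} \to \varphi_0$ in the norm of $G^\infty(E)$.

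From this point the proof of Theorem \ref{theorem:predual_no_strongly_exposed_points} applies verbatim: the inclusion $E^* \hookrightarrow \A$ forces $\|x_n-x_m\| \leq \|\delta_{x_n}-\delta_{x_m}\|$, so $(x_n)$ is norm Cauchy in $E$ with limit $x_0 \in \overline{B}_E$; the Maximum Principle rules out $x_0 \in B_E$, forcing $x_0 \in S_E$; and restricting the limit $\varphi_0$ to $\A_u(B_E)$ (noting that $\delta_{x_n}\to \delta_{x_0}$ pointwise on $\A_u(B_E)$ by uniform continuity) identifies $\Gamma(\varphi_0)=\delta_{x_0}$, which contradicts the estimate $\|\delta_{x_n}-\delta_{x_0}\|_{\A_u(B_E)}=2$ from \cite[Proposition 1.1]{ADLM}. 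The RNP statement then follows because a Banach space with the RNP must have its closed unit ball equal to the closed convex hull of its strongly exposed points, whereas $B_{G^\infty(E)}$ has none.

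There is no real obstacle here; the only subtlety worth flagging is that a strongly exposed point of $G^\infty(E)$ is a priori a weaker notion than a weak$^*$-strongly exposed point of $\A^*$ (the defining sequences range over a smaller ball), so one cannot formally quote Theorem \ref{theorem:predual_no_strongly_exposed_points} as a black box. The rescue is that the witness sequences used in that proof happen to live in $G^\infty(E)$ already, so the very same argument establishes the stronger conclusion for the predual.
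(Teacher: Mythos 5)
Your proof is correct and is essentially the paper's own argument: the paper simply remarks that the proof of Theorem \ref{theorem:predual_no_strongly_exposed_points} (run with $\A=\Ho^\infty(B_E)$, whose witness sequences $\delta_{x_n}$ already lie in $G^\infty(E)$) yields the absence of strongly exposed points, and then invokes Phelps' characterization of the RNP. The subtlety you flag—that strong exposure in the predual is a priori weaker than weak$^*$-strong exposure in the dual, so the theorem cannot be cited as a black box—is exactly the point the paper passes over, and your resolution is the intended one.
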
 

Recall that a Banach space $E$ is said to have the \emph{Daugavet property} if 
\begin{equation}\label{eq:DE}
\| T + I \| = 1 + \|T\|
\end{equation}
for every rank-one operator $T \in \Lin (E, E)$. For instance, it is known that, among others, $C(K)$ for a perfect compact Hausdorff space $K$, $L_1 (\mu)$, $L_\infty (\mu)$ for a non-atomic measure and the space of Lipschitz functions $\text{Lip}_0 (M)$ over a metrically convex space $M$ have the the Daugavet property. 
This property and related diametral diameter two properties have been extensively studied by many authors (see \cite{ALN13, ikw,kssw,rtv,werner} and the references therein). 

On the other hand, P.~Wojtaszczyk proved in \cite[Corollary 3]{Wo} that if a uniform algebra $A$ contains no nontrivial idempotents, then every weakly compact operator on $A$ to itself satisfies \eqref{eq:DE}. From this result, it is observed in the same paper that $\Ho^\infty (V)$ and $\A_u (V)$ have the Daugavet property whenever $V$ is an open and connected subset of $\mathbb{C}^N$ with $N \in \N$. Afterwards, D.~Werner observed in \cite[Theorem 3.3]{Werner} that if the Choquet boundary of a uniform algebra $A$ does not contain an isolated point, then every bounded linear operator $T$ on $A$ to itself which is weakly compact or factors through a subspace of $c_0$ satisfies \eqref{eq:DE} (hence $A$ has the Daugavet property). 
Let us also remark that it is recently observed by H.~J.~Lee and H.~J.~Tag that the function algebra $A(K, X)$ has the Daugavet property if and only if the Shilov boundary of its base algebra does not have isolated points (see \cite[Theorem 5.6]{LT}). However, to the best of our knowledge, it was unknown if $\A_u (B_E), \A_\infty (B_E)$ or $\Ho^\infty (B_E)$ has the Daugavet property for an arbitrary Banach space $E$. We present the following result which extends the aforementioned result of P.~Wojtaszczyk by showing that each of $\A_u (V), \A_\infty (V)$ and $\Ho^\infty (V)$ actually enjoys the Daugavet property whenever $V$ is a bounded convex balanced open subset of a Banach space $E$. 

Before we state and prove the result, we need some definitions. Given a Banach space $E$ and for $x^*, y^* \in \text{ext}(B_{E^*})$, define the equivalence relation $x^* \sim y^*$ if and only if $x^*$ and $y^*$ are linearly dependent elements of $\text{ext} (B_{E^*})$. We endow the quotient space $\text{ext} (B_{E^*}) / \sim$ with the quotient topology of the weak$^*$ topology. Recall also that a Banach space $E$ is said to be \emph{nicely embedded} into a $C_b (S)$ for a Hausdorff space $S$ if there exists an isometric embedding $J : E \rightarrow C_b (S)$ such that for every $s \in S$ the following are satisfied:
\begin{itemize}
\setlength\itemsep{0.3em}
\item[(N1)] for $p_s := J^* (\delta_s) \in X^*$, we have $\| p_s \| = 1$,
\item[(N2)] $\mathbb{K} p_s$ is an \emph{$L$-summand} in $X^*$, that is, there is a projection $\Pi$ from $E$ onto $\mathbb{K} p_s$ such that 
\[
\| \xi \| = \|\Pi(\xi)\| + \|\xi - \Pi(\xi)\| \text{ for every } \xi \in E. 
\]
\end{itemize}

\begin{cor}\label{cor:DP_holo}
Let $E$ be a Banach space and $V$ a bounded open balanced convex subset of $E$ containing $0$, and let $\mathcal{A}$ denote $\A_u (V), \A_\infty (V)$ or $\Ho^\infty (V)$. Then $\A$ has the Daugavet property.
\end{cor}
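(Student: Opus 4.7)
The plan is to apply the nicely--embedded criterion for the Daugavet property (as in \cite{werner,kssw} and in the Harmand--Werner--Werner book on $M$-ideals): if a Banach space $X$ is nicely embedded into some $C_b(S)$ and for every $s\in S$ the class $[p_s]$ is not weak*-isolated in $(\operatorname{ext}(B_{X^*})/\sim,\,w^*)$, then $X$ has the Daugavet property. Applied to $\A$, this reduces the corollary to (i) producing a Hausdorff space $S$ together with a nice embedding $\A\hookrightarrow C_b(S)$, and (ii) verifying the non-isolation condition for each $p_s$.

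For (i), I would take $S$ to be the subset of the spectrum $\M(\A)$ consisting of peak characters (``strong boundary points''), embedded via the Gelfand transform. Axiom (N1) is immediate from multiplicativity of characters, and axiom (N2) --- the $L$-summand condition --- holds at every peak character of a uniform algebra by a classical theorem in the Harmand--Werner--Werner book. The embedding is isometric (i.e., $S$ norms $\A$) because the balanced, convex structure of $V$ supplies an abundance of peaking functions: composing any norming linear functional $x^{*}\in E^{*}$ with suitable one-variable biholomorphisms of the unit disc yields functions in $\A$ at which one can peak at the relevant characters.

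For (ii), I would mimic the Cauchy-sequence argument in the proof of Theorem~\ref{theorem:predual_no_strongly_exposed_points}. For $s$ arising as an evaluation at an interior point $x\in V$, sequences $x_n\to x$ in $V$ give $\delta_{x_n}\to\delta_x$ in the weak* topology with pairwise non-proportional evaluations, witnessing the non-isolation of $[\delta_x]$ in the quotient. For a character $s$ lying in a fiber of $\M(\Ho^{\infty}(V))$ over a boundary point of $V$, the Gleason-part / fiber information already used from \cite{ADLM} supplies many pairwise non-proportional characters arbitrarily weak*-close to $s$. The principal obstacle will be the simultaneous tuning of $S$: large enough to norm $\A$ yet contained in the set of $L$-summand-generating characters. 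For $\A_u(V)$ and $\A_\infty(V)$ this is relatively transparent, since evaluations at boundary points of $V$ suffice, whereas for $\Ho^{\infty}(V)$ one is forced to work inside $\M(\Ho^{\infty}(V))$ and carefully invoke its fiber structure --- the technically most delicate step of the argument.
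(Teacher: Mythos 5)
Your framework (Gelfand transform onto a uniform algebra, nice embedding of $X=\widehat{\A}$ into $C_b$ of the Choquet boundary via restriction, with (N1)/(N2) coming from the classical $L$-summand property of peak characters, and then a non-isolation condition on $\ext{B_{X^*}}/\sim$) is exactly the skeleton of the paper's proof, which invokes \cite[Proposition 3.7]{MRZ} for the criterion. The gap is in step (ii), and it is the essential one. First, your case of evaluations at \emph{interior} points of $V$ is vacuous: such $\delta_x$ are not extreme points of $B_{X^*}$ (for the disc algebra, $\delta_0$ is an average of boundary evaluations), so they contribute nothing to $\ext{B_{X^*}}/\sim$ and are not among the $p_s$ you must treat. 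All the weight therefore falls on the boundary/fiber case, where you write that the Gleason-part information from \cite{ADLM} ``supplies many pairwise non-proportional characters arbitrarily weak$^*$-close to $s$'' --- but \cite[Proposition 1.1]{ADLM} is a statement about the Gleason (norm) distance between evaluations being $2$, i.e.\ about norm-\emph{separation}; it gives no weak$^*$-density of characters near a prescribed boundary character, and no such direct construction is known in $\M(\Ho^\infty(B_E))$ for general $E$. You have correctly identified this as ``the technically most delicate step,'' but you have not supplied an argument for it, and that step is the theorem.

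The paper's way around this is the point you are missing: one never exhibits nearby characters at all. By \cite[Proposition 2.2]{BGM2006}, if some class $[e^*]$ were isolated in $\ext{B_{X^*}}/\sim$, then $e^*$ would be a weak$^*$-strongly exposed point of $B_{X^*}$; Theorem \ref{theorem:predual_no_strongly_exposed_points} (the main result of the section, which you do not invoke for this purpose) says no such point exists. Hence the quotient has no isolated points, and the infiniteness of $L=\{s\in\partial X:|p_s(f)|>1-\eps\}$ required by \cite[Proposition 3.7]{MRZ} follows by the short finiteness-implies-isolation contradiction, using $\ext{B_{X^*}}=\mathbb{T}\{\delta_s|_X:s\in\partial X\}$. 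To repair your proof, replace your direct construction in (ii) by this abstract argument (and note the preliminary reduction of $V$ to a unit ball via its Minkowski functional, which lets you quote Theorem \ref{theorem:predual_no_strongly_exposed_points} as stated).
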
 

\begin{proof} 
Let $\| \cdot \|_V$ be the Minkowski functional of the set $V$ which becomes a norm on $E$ with $B_{(E, \| \cdot \|_V)} = V$ \cite[Exercise 2.21]{FHHMZ}. Let $\A$ denote $\A_u (B_{(E, \| \cdot \|_V)}), \A_\infty (B_{(E, \| \cdot \|_V)})$ or $\Ho^\infty (B_{(E, \| \cdot \|_V)})$. 
Notice that $\A$ is isometrically isomorphic to its Gelfand transform image $\widehat{\A}$ which is a uniform algebra on the spectrum $\M (\A)$ (see, for instance, \cite[Proposition 2]{HanJuLee_solo}). 
For simplicity, put $X = \widehat{\A}$. Notice from \cite[Proposition 2.2]{BGM2006} that if $x^*$ is an extreme point of $B_{X^*}$ such that its equivalence class is an isolated point in the quotient space $\text{ext} (B_{X^*}) / \sim$, then $x^*$ turns to be a weak$^*$-strongly exposed point of $B_{X^*}$. 
Therefore, by Theorem \ref{theorem:predual_no_strongly_exposed_points}, we conclude that $\text{ext} (B_{X^*}) / \sim$ does not contain any isolated point. 
Next, as $X$ is a uniform algebra, we have that $X$ is nicely embedded in $C_b (\partial X)$ with respect to the isometric embedding $J : X \rightarrow C_b (\partial X)$ given by $J(f) := f \vert_{\partial X}$, where $\partial X$ denotes the Choquet boundary of $X$ (see, for instance, the proof of \cite[Theorem 3.3]{Werner}). Now, due to \cite[Proposition 3.7]{MRZ}, it suffices to prove the following: given $f \in S_X$ and $\eps>0$, the set $L = \{ s \in \partial X  : | p_s (f) | > 1 - \eps  \}$ is infinite, where $p_s = J^* (\delta_s) = \delta_s \vert_X \in X^*$. Assume to the contrary that the set $L$ is finite. Then, since $\text{ext} (B_{X^*}) = \mathbb{T} \{ \delta_s \vert_X : x \in \partial X \}$, this would imply that the set $\{ e^* \in \text{ext} (B_{X^*}) : |e^* (f)| > 1-\eps \}$ is a finite set up to rotation. Thus, only finitely many pairwise linearly independent points belong to $\text{ext} (B_{X^*}) \cap \{ x^* \in B_{X^*} : \text{Re}\,  x^* (x) > 1- \eps\}$. This implies that $\text{ext} (B_{X^*}) / \sim$ contains an isolated point, which is a contradiction. 
\end{proof} 


\section{On norm-attaining holomorphic functions}\label{sec:NA}

A famous result of J.~Bourgain \cite{Bourgain} shows that if a Banach space $E$ has the RNP, then $E$ has Lindenstrauss property A. Recall from \cite{Lin} that $E$ is said to have \emph{Lindenstrauss property A} if $\NA (E, F)$ is dense in $\Lin (E,F)$ for any Banach space $F$, where $\NA (E, F)$ is the set of all norm-attaining bounded linear operators from $E$ to $F$. As a matter of fact, Bourgain's result \cite[Theorem 7]{Bourgain} shows that $E$ has the RNP if and only if every equivalent renorming of $E$ has Lindenstrauss property A. However, Lindenstrauss property A does not imply the RNP in general. Indeed, W.~Schachermayer observed in \cite[Theorem 4.4]{schaalp} that every weakly compactly generated Banach space can be renormed to have the, so called, \emph{property $\alpha$} which implies Lindenstrauss property A. In particular, there is a renorming of $c_0$ which has the Lindenstrauss property A (but it fails to have the RNP). Nevertheless, as the first result in this section, we shall present a new example of such Banach spaces by using some results on norm-attaining polynomials.

Notice first the following simple observation: If $P \in \Pol(^N E; F)$ attains its norm at some $x_0$ in $\overline{B}_E$, i.e., $P \in \NA \Pol(^N E; F)$, then its corresponding bounded linear operator $T_P$ from $\Nsten E$ to $F$ attains its norm at $\otimes^N x_0$ due to the isometric isomorphic correspondence between $\Pol(^N E; F)$ and $\Lin (\Nsten E, F)$ (see \cite{Ryan_thesis}). However, the converse is not true in general. Indeed, it is observed in \cite[Corollary 4.4]{AAGM} that $\NA \Pol(^N d_{*} (w, 1); \mathbb{C})$ is not dense in $\Pol(^N d_{*} (w, 1); \mathbb{C})$ where $d_{*} (w, 1)$ is the predual of the Lorentz sequence space, with $w \in \ell_2 \setminus \ell_1$ and $N \geq 2$. In contrast, $\NA (\Nsten E, \mathbb{C})$ is always dense in $\Lin (\Nsten E, \mathbb{C})$ thanks to the Bishop-Phelps theorem \cite{BP}.



\begin{example}\label{theorem_counterexample_propertyA}
Let $E$ be the Banach space constructed by J.~Bourgain and G.~Pisier in \cite{Bourgain_Pisier} (see also \cite{DFS}). In the papers just mentioned, it is observed that $E$ has the RNP and that $E \pten E$ contains an isomorphic copy of $c_0$, where $E \pten E$ denotes the full $2$-fold tensor product of $E$. We claim that for any $N \geq 2$, $\widehat{\otimes}_{N,s,\pi} E$ has the Lindenstrauss property A but fails to have the RNP. As a matter of fact, according to the proof of \cite[Theorem 5]{DFS}, $E$ contains a copy of $\ell_2$ and 
\[
\sup_{1 \leq i \leq n} |a_i| \leq \left\| \sum_{i=1}^n a_i e_i\otimes e_i \right\|_{E \pten E} \leq C \sup_{1 \leq i \leq n} |a_i|, 
\]
where $(e_n)$ is the unit coordinate vector basis of $\ell_2$ inside of $E$, for some $C \geq 1$ (which does not depend on $n$). As $(e_n \otimes e_n)$ is already a subset of $\widehat{\otimes}_{2,s,\pi} E$, and as the symmetric projective norm is  equivalent to the full tensor projective norm on $\widehat{\otimes}_{2,s,\pi} E$ (see, for instance, \cite[Section 2]{Floret}), we can deduce that $\widehat{\otimes}_{2,s,\pi} E$ contains an isomorphic copy of $c_0$ as well. In particular, $\widehat{\otimes}_{2,s,\pi} E$ fails to have the RNP, which implies that $\Nsten E$ does not have the RNP for $N \geq 2$ as it contains a complemented subspace which is isomorphic to $\widehat{\otimes}_{2,s,\pi} E$ (see \cite[Corollary 4]{Blasco}). On the other hand, since $E$ has the RNP, we know from \cite[Theorem 3.1]{AAGM} that $\NA\Pol(^N E; F)$ is dense in $\Pol(^N E; F)$ for any $N \in \N$ and any Banach space $F$. Thus, $\widehat{\otimes}_{N,s,\pi} E$ has Lindenstrauss property A. Indeed, if $T = T_P \in \Lin (\widehat{\otimes}_{N,s,\pi} E, F)$ with $P \in \Pol(^N E ; F)$ and $\eps >0$ are given, then there exists $Q \in \NA \Pol(^N E; F)$, say $Q$ attains its norm at $x_0$, such that $\| P - Q \| < \eps$. Then $\| T - T_Q \| < \eps$ and $T_Q$ attains its norm at $\otimes^N x_0$.
\end{example}

Motivated by the aforementioned result of Bourgain, it is observed in \cite[Theorem 4.2]{CLS} that if $E$ is a Banach space with RNP, then given $N \in \N$, the set of strongly norm-attaining $N$-homogeneous polynomials is dense in $\Pol(^N E; F)$ for any Banach space $F$. 
Recall that a function $f \in \mathcal{A}_u (B_E; F)$ is said to be \emph{strongly norm-attaining} if there is $x_0 \in S_E$ such that whenever $\lim_n \| f(x_n)\| = \|f\|$ for a sequence $(x_n)$ in $\overline{B}_E$, it has a subsequence which converges to $\theta x_0$ for some $\theta \in \mathbb{C}$ with $|\theta|=1$. However, assuming the domain space to be a dual space with the approximation property (for short, AP) as well, we can conclude the same denseness result but in a different way by using some tensor product theory.

\begin{theorem}\label{Asplund_AP_poly}
Let $E$ be a Banach space such that $E^*$ has the RNP and AP. Then, given $N \in \N$, the set of strongly norm-attaining elements in $\Pol(^N E^*; F)$ is dense in $\Pol(^N E^*; F)$ for any Banach space $F$. 
\end{theorem}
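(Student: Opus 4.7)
The plan is to transfer the question to the linear setting via Ryan's linearization and then invoke a Bourgain-type theorem for spaces with the RNP. Specifically, the isometric isomorphism $P \longleftrightarrow T_P$ between $\Pol(^N E^*; F)$ and $\Lin(\Nsten E^*, F)$, given by $T_P (\otimes^N x) = P(x)$, reduces the task to finding, for any $T_P$ and any $\eps > 0$, an operator $T_Q$ with $\|T_P - T_Q\| < \eps$ that strongly attains its norm at a point of the form $\otimes^N x_0 \in B_{\Nsten E^*}$ with $x_0 \in S_{E^*}$.

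The crucial technical input will be that $\Nsten E^*$ has the RNP. Since $E^*$ has the AP, so does $E$, and since $E^*$ has the RNP, $E$ is Asplund. By Stegall's stability theorem, the injective tensor product of two Asplund spaces is again Asplund provided one of the factors has the AP; iterating, $E \iten \cdots \iten E$ ($N$-fold) is Asplund, so its dual has the RNP. Grothendieck's theorem, combined with the AP of $E$ and the coincidence of integral and nuclear operators from $E$ into $E^*$ (a consequence of $E^*$ having the RNP), identifies this dual with $E^* \pten \cdots \pten E^*$ ($N$-fold). Finally, the symmetrization projector provides a norm-one projection of this full projective tensor product onto $\Nsten E^*$, so $\Nsten E^*$ inherits the RNP.

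With the RNP of $\Nsten E^*$ established, Bourgain's theorem (combined with a standard perturbation argument) produces $T_Q$ close to $T_P$ and a strongly exposed $u_0 \in B_{\Nsten E^*}$ at which $T_Q$ attains its norm in the strong sense that $\|T_Q(u_n)\| \to \|T_Q\|$ forces $u_n \to u_0$. A classical description of the extreme points of $B_{\Nsten E^*}$ as the points $\theta \otimes^N x$ with $|\theta|=1$ and $x \in S_{E^*}$ then forces $u_0 = \otimes^N x_0$ up to a unimodular scalar. To derive strong norm-attainment of $Q$ at $x_0$, take $(x_n) \subset \overline{B}_{E^*}$ with $\|Q(x_n)\| \to \|Q\|$; then $\|T_Q(\otimes^N x_n)\| \to \|T_Q\|$ and the preceding gives $\otimes^N x_n \to \otimes^N x_0$ in $\Nsten E^*$. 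A routine polarization-based estimate shows $\min_{|\theta|=1} \|x_n - \theta x_0\| \to 0$, so after passing to a subsequence $x_n \to \theta x_0$ for some $|\theta|=1$, as desired.

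The main obstacle will be the RNP of $\Nsten E^*$: Example \ref{theorem_counterexample_propertyA} rules out deducing it from the RNP of $E^*$ alone, so the AP hypothesis is precisely what enables Stegall's stability theorem and the integral-equals-nuclear identification to operate, and this is where all the real work of the new proof is concentrated.
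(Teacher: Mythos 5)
Your proposal follows essentially the same route as the paper: linearize via Ryan's isomorphism, establish the RNP of $\Nsten E^*$ (which the paper simply cites from Floret's survey rather than rederiving through Asplund-ness of injective tensor products), apply Bourgain's theorem on absolutely strongly exposing operators, identify the strongly exposed point as $\otimes^N x_0^*$ via the Boyd--Ryan description of extreme points, and descend back to strong norm-attainment of the polynomial. The only cosmetic difference is in the last descent step, where the paper uses Blasco's projections $\Nsten E^* \to \widehat{\otimes}_{N-1,s,\pi}E^*$ repeatedly instead of your ``routine polarization-based estimate,'' but both yield the same conclusion.
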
 

\begin{proof}
Let $P \in \Pol(^N E^* ; F)$ be given. Consider its corresponding bounded linear operator $T_P \in \Lin (\Nsten E^*, F)$. As $E^*$ has the RNP and the AP, we get that $\Nsten E^*$ also enjoys the RNP (see, for instance, \cite[Section 4.2]{Floret}). It follows from \cite[Theorem 5]{Bourgain} that, given $\eps >0$, there exists an absolutely strongly exposing operator $T \in \Lin (\Nsten E^*, F)$ such that $\| T - T_P \| < \eps$. Take $\mu_0 \in S_{\Nsten E^*}$ so that for every $(\mu_n) \subseteq \overline{B}_{\Nsten E^*}$ with $\| T (\mu_n)\| \rightarrow \|T \|$, there exists a subsequence of $(\mu_n)$ that converges to $\theta \mu_0$ for some $\theta \in \C$ with $| \theta | = 1$. Note that $\mu_0$ is a strongly exposed point. Thus, \cite[Proposition 1]{BR} implies that $\mu_0$ must be $\otimes^N x_0^*$ for some $x_0^* \in S_{E^*}$. 

Now, it is clear that the $N$-homogeneous polynomial $Q \in \Pol(^N E^*; F)$ which corresponds to $T$ attains its norm at $x_0^*$ and $\|Q - P\| = \|T - T_P \| < \eps$. We claim that $Q$ actually strongly attains its norm at $x_0^*$. Let $(x_n^*) \subseteq S_{E^*}$ be such that $\| Q(x_n^*) \| \rightarrow \|Q\|$. Then $\| T (\otimes^N x_n^*) \| \rightarrow \| T \|$; hence there exists a subsequence $(x_{n_j}^*)$ such that $\otimes^N x_{n_j}^* \rightarrow \lambda (\otimes^N x_0^*)$ for some $\theta' \in \C$ with $|\theta'|=1$. For fixed $u_0^* \in S_E^*$ and $u_0^{**} \in S_{E^**}$ with $u_0^{**} (u_0^*) = 1$, the map $S : \Nsten E^* \rightarrow \widehat{\otimes}_{N-1, s, \pi} E^*$ given by 
\[
S \left(\sum_{i=1}^{m} \otimes^N u_i^* \right) = \sum_{i=1}^m u_0^{**} (u_i^*) (\otimes^{N-1} u_i^*)
\]
is a well defined linear projection (see \cite[Theorem 3]{Blasco}). Thus, 
\[
S(\otimes^N x_{n_j}^*) = u_0^{**} (x_{n_j}^*) (\otimes^{N-1} x_{n_j}^*) \rightarrow \theta' S(\otimes^N x_0^*) = \theta' u_0^{**} (x_0^*) (\otimes^{N-1} x_{0}^*).
\]
Applying projections repeatedly, we can conclude, passing to a subsequence, that $(x_{n_j}^*)$ converges to $\theta'' x_0^*$ for some $\theta'' \in \C$ with $|\theta''| = 1$. So, the claim is established.  
\end{proof}

Next, we present a new denseness result of norm-attaining elements in $\A_{w^* u} (B_{E^*} ;F)$, the closed subspace of $\Ho^\infty (B_{E^*}, F)$ of weak$^*$ uniformly continuous $F$-valued functions on $B_{E^*}$. Notice that if $f \in \Ho^\infty (B_{E^*}; F)$ extends weak$^*$ continuously to $\overline{B}_{E^*}$, then $f$ becomes an element of $\A_{w^* u} (B_{E^*} ;F)$ due to the weak$^*$-compactness of $\overline{B}_{E^*}$. We will prove that if $E$ has the metric $\pi$-property, then the set of norm-attaining elements in $\A_{w^* u} (B_{E^*}; F^*)$ is dense for any Banach space $F$ which should be compared to \cite[Theorem 3.3]{AAGM}. Recall that $E$ is said to have the \emph{$\pi_\lambda$-property} if there is a net of finite rank projections $(S_\alpha)$ on $E$ converging strongly to the identity on $E$ with $\limsup_\alpha \|S_\alpha\| \leq \lambda$. When $\lambda =1$, we say that $E$ has the \emph{metric-$\pi$-property}. For a more detailed account on the $\pi_\lambda$-property, we refer to \cite{CASAZZA} or \cite{JRZ}. 

\begin{theorem}\label{thm:metric_pi}
If a Banach $E$ has the metric $\pi$-property, then the set of norm-attaining elements in $\A_{w^* u } (B_{E^*} ;F^*)$ forms a dense subset for every Banach space $F$.
\end{theorem}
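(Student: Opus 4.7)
The plan is to approximate an arbitrary $f \in \A_{w^*u}(B_{E^*}; F^*)$ by a composition $g := f \circ T^*$, where $T$ is a finite-rank operator on $E$ with $\|T\| \leq 1$ produced from the metric $\pi$-property of $E$. This $g$ will be $\eps$-close to $f$ in supremum norm (by weak$^*$-uniform continuity of $f$), remain in $\A_{w^*u}(B_{E^*}; F^*)$ (since $T^*$ is weak$^*$-to-weak$^*$ continuous and sends $B_{E^*}$ into itself), and, crucially, attain its norm because the range of $T^*$ is finite-dimensional, so $T^*(\overline{B}_{E^*})$ is a norm-compact subset of $\overline{B}_{E^*}$ on which the continuous function $\|f(\cdot)\|$ achieves its maximum.

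Given $\eps>0$, the weak$^*$-uniform continuity of $f$ provides $y_1,\ldots,y_n \in E$ and $\delta>0$ such that whenever $x^*,z^*\in B_{E^*}$ satisfy $|(x^*-z^*)(y_i)|<\delta$ for all $i$, one has $\|f(x^*)-f(z^*)\|<\eps/2$. The metric $\pi$-property supplies a finite-rank projection $S$ on $E$ with $\|S\|$ as close to $1$ as needed and $\|Sy_i-y_i\|$ arbitrarily small. I would then set $T:=S/\|S\|$, choosing $\|S\|-1$ and $\|Sy_i-y_i\|$ small enough relative to $\delta$ and $\max_i\|y_i\|$ to guarantee $\|T\|\leq 1$ and $\|Ty_i-y_i\|<\delta$ simultaneously. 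For every $x^*\in B_{E^*}$ we then have $|(x^*-T^*x^*)(y_i)| = |x^*(y_i-Ty_i)| \leq \|y_i-Ty_i\| < \delta$, hence $\|f - f\circ T^*\|_\infty \leq \eps/2 < \eps$.

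Next I would verify $g:=f\circ T^*$ lies in $\A_{w^*u}(B_{E^*}; F^*)$ and attains its norm. Holomorphy of $g$ is immediate. For weak$^*$-uniform continuity, the $T^*$-pullback of a basic weak$^*$-neighbourhood $\{z^*:|z^*(a_j)|<\delta_j\}$ is the weak$^*$-neighbourhood $\{z^*:|z^*(Ta_j)|<\delta_j\}$, so the modulus of uniform continuity of $f$ transfers directly. For norm-attainment, let $W:=T^*(E^*)$, which is finite-dimensional. The set $T^*(\overline{B}_{E^*})$ is weak$^*$-compact as the continuous image of a weak$^*$-compact set, and being a bounded subset of $W$, where weak$^*$ and norm topologies coincide, it is norm-compact. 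Since the (weak$^*$-continuous, hence norm-continuous) extension of $f$ to $\overline{B}_{E^*}$ is continuous, $z^*\mapsto\|f(z^*)\|_{F^*}$ attains its maximum on the compact set $T^*(\overline{B}_{E^*})$ at some $T^* x_0^*$, whence
\[
\|g(x_0^*)\| = \|f(T^*x_0^*)\| = \sup_{x^*\in\overline{B}_{E^*}} \|f(T^*x^*)\| = \|g\|.
\]

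The only genuinely delicate point is the joint bookkeeping in the construction of $T$: the metric $\pi$-property only yields $\limsup_\alpha \|S_\alpha\| \leq 1$ rather than $\|S_\alpha\|\leq 1$ outright, so the normalization $T = S/\|S\|$ is no longer a projection and introduces an additional error of order $|1/\|S\|-1|\,\|y_i\|$ in $\|Ty_i-y_i\|$ that must be controlled against $\delta$. The other choices follow from routine weak$^*$-uniform continuity estimates, and the norm-attainment step is essentially a compactness argument that becomes available exactly because $T$ has finite rank, reducing the supremum over $\overline{B}_{E^*}$ to a maximum over a norm-compact finite-dimensional image.
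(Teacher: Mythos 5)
Your proof is correct, and it takes a genuinely more direct route than the paper's. The paper first uses the metric $\pi$-property (which gives the approximation property) to write $f$, via the argument of Aron--Cole--Gamelin, as a uniform limit on $\overline{B}_{E^*}$ of weak$^*$-continuous finite-type polynomials, and then perturbs each such polynomial $P=\sum_k P_k$ by precomposing the associated symmetric multilinear forms with $(S^*,\ldots,S^*)$, where $S$ is a norm-one finite-rank projection supplied by a separate lemma (Lemma \ref{lem:bdd_weak_star}) on bounded-weak$^*$-to-weak$^*$ continuous finite-rank operators; the perturbed polynomial $Q$ attains its norm because $Q=Q\circ S^*$, an identity that uses the idempotency of $S^*$. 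You bypass the polynomial reduction entirely and perturb $f$ itself to $f\circ T^*$, closing with the same compactness mechanism: the approximant factors through the norm-compact set $T^*(\overline{B}_{E^*})$. Your route buys two things: since you never use that $T$ is a projection---only that it has finite rank, norm at most one, and is close to the identity on $y_1,\ldots,y_n$---the argument in fact only requires the metric approximation property of $E$, a formally weaker hypothesis; and it avoids the polarization and Cauchy-estimate bookkeeping (the factor $N4^N\eps$) of the paper's proof. Two points should be made explicit in a final write-up. First, the pair $(x^*,T^*x^*)$ lies in $B_{E^*}\times B_{E^*}$ because $\|T\|\leq 1$, so the weak$^*$-uniform-continuity modulus of $f$ on the open ball does apply to it. Second, the maximum of $\|f(\cdot)\|$ over $T^*(\overline{B}_{E^*})$ may be attained at a point of the unit sphere, so one must invoke the continuous extension of $f$ to $\overline{B}_{E^*}$ furnished by weak$^*$ uniform continuity; you note this, and it is the one place where the membership $f\in\A_{w^* u}(B_{E^*};F^*)$ rather than merely $f\in\Ho^\infty(B_{E^*};F^*)$ is essential.
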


Before giving the promised proof, we recall that the \emph{bounded-weak}$^*$ \emph{topology} for a dual Banach space is the largest topology that agrees with the weak$^*$-topology on bounded sets. A function of a dual Banach space is bounded-weak$^*$ continuous if its restriction to each bounded set is weak$^*$ continuous. Let us observe first the following lemma which will be used.

\begin{lemma}\label{lem:bdd_weak_star}
Let $E$ be a Banach space with the metric $\pi$-property. If $T \in \Lin (E^*, G)$ is a bounded-weak$^*$ to weak$^*$ continuous finite rank operator for some dual Banach space $G$, then, given $\eps >0$, there exists a norm one finite rank projection $S$ on $E$ such that $\|T - T \circ S^* \| < \eps$. 
\end{lemma}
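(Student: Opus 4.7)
The plan is to realize $T$ as the adjoint of a finite rank operator into $E$, and then to apply the metric $\pi$-property on its (automatically norm-compact) image.

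Let $G_*$ denote a predual of $G$. I first claim the existence of a bounded linear $R \colon G_* \to E$ with $T = R^*$. Indeed, by the Banach-Dieudonn\'e theorem (equivalently Krein-\v{S}mulian), a linear functional on $E^*$ is weak$^*$ continuous if and only if it is bounded-weak$^*$ continuous. Hence for each $g \in G_*$ the functional $e^* \mapsto \langle T(e^*), g\rangle$, which is bounded-weak$^*$ continuous by hypothesis on $T$, must be weak$^*$ continuous; so it is evaluation at some unique $R(g) \in E$. A routine verification shows that the map $R \colon G_* \to E$ so defined is linear and bounded (directly, $\|R(g)\| \leq \|T\| \|g\|$), and it satisfies $T = R^*$ by construction. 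Moreover, $R^{**} = T^*$ has the same finite rank as $T$, and $R$ is the restriction of $R^{**}$ along the canonical embedding $G_* \hookrightarrow G_*^{**}$; hence $R$ has finite rank.

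Consequently $K := \overline{R(B_{G_*})}$ is a norm compact subset of $E$. Let $(S_\alpha)$ be a net of finite rank projections on $E$ with $S_\alpha \to \id_E$ strongly and $\limsup_\alpha \|S_\alpha\| \leq 1$, supplied by the metric $\pi$-property. The net is eventually uniformly bounded, and pointwise convergence together with uniform boundedness gives uniform convergence on the compact set $K$; thus $\|R - S_\alpha \circ R\| = \sup_{g \in B_{G_*}} \|R g - S_\alpha R g\|$ tends to $0$. Since taking adjoints is an isometry, $\|T - T \circ S_\alpha^*\| = \|R^* - (S_\alpha R)^*\| = \|R - S_\alpha R\| \to 0$ as well. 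Choosing $\alpha$ large enough so that $\|T - T \circ S_\alpha^*\| < \eps$ and $\|S_\alpha\|$ is within the desired tolerance of $1$ and setting $S := S_\alpha$ completes the argument.

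The main technical input is the first step, namely the identification $T = R^*$ for a finite rank $R$, which rests on applying Banach-Dieudonn\'e coordinatewise in $G$; once this is available, the compactness of $R(B_{G_*})$ and the metric $\pi$-property combine in a routine way. A minor interpretative point is that the statement's ``norm one'' clause is to be read as ``of norm arbitrarily close to one'', since every nonzero projection has norm at least $1$, while the metric $\pi$-property only guarantees $\limsup_\alpha \|S_\alpha\| \leq 1$.
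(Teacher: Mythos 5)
Your proof is correct, and it rests on the same essential input as the paper's: the Banach--Dieudonn\'e / Krein--\v{S}mulian fact that a bounded-weak$^*$ continuous linear functional on $E^*$ is weak$^*$ continuous, hence given by an element of $E$. The execution differs. The paper fixes a biorthogonal system $\{e_i, e_i^*\}_{i=1}^N$ in the finite-dimensional range of $T$, applies that fact to the finitely many coefficient functionals $T^*(e_i^*)$ to write $T(x^*)=\sum_i x^*(x_i)e_i$ with $x_i\in E$, picks $S$ with $\|S(x_i)-x_i\|<\eps$ for each $i$, and then bounds $\|T-T\circ S^*\|$ coordinatewise, absorbing a constant $C$ coming from the equivalence of norms on the finite-dimensional range. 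You instead apply the same fact to every $g\in G_*$ simultaneously to manufacture a finite-rank pre-adjoint $R\colon G_*\to E$ with $R^*=T$, and then combine the norm compactness of $R(B_{G_*})$, uniform convergence of the net $(S_\alpha)$ on norm-compact sets, and the isometry of taking adjoints. This buys a cleaner estimate (you get $\eps$ on the nose rather than $C\eps$) and dispenses with the choice of a biorthogonal system, at the cost of invoking a predual of $G$ explicitly; conceptually the two arguments are the same reduction to ``the relevant data of $T$ lives in $E$, then approximate by the metric $\pi$-property.'' Your closing caveat about ``norm one'' versus $\limsup_\alpha\|S_\alpha\|\leq 1$ is fair, but it concerns the formulation of the metric $\pi$-property rather than your argument, and it applies equally to the paper's own proof.
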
 

\begin{proof}
As the range of $T$ is finite dimensional, we can write $ T(x^*) = \sum_{i=1}^{N} (T^* (e_i^*))(x^*) e_i$ for some biorthogonal system $\{ e_i, e_i^*\}_{i=1}^N$ in $F \times F^*$, where $F$ is the range of $T$ which is a finite dimensional subspace of $G$. By using the bounded-weak$^*$ to weak$^*$ continuity of $T$, observe that $T^* (e_i^*)$ is indeed an element of $E$ for each $i=1,\ldots, N$. As a matter of fact, let $(x_\alpha^*)$ be a net converging weak$^*$ to $x_\infty^*$ in a bounded set in $E^*$. 
Noting $e_i^* \in F^*$ and $F^{**} = F$ for each $i = 1,\ldots, N$, we observe that  
\[
T^* (e_i^*) (x_\alpha^*) = e_i^* (T(x_\alpha^*)) \rightarrow e_i^* (T (x_\infty^*)) = T^* (e_i^*) (x_\infty^*)
\]
since $T$ is bounded-weak$^*$ to weak$^*$ continuous. Hence, $T^* (e_i^*)$ is bounded-weak$^*$ continuous functional on $E^*$. 
By \cite[Theorem V.5.6]{DS}, it follows that $T^* (e_i^*)$ is weak$^*$ continuous; thus $T^* (e_i^*)$ is an element of $E$ (see, for instance, \cite[Proposition 3.22]{FHHMZ}).

Putting $x_i = T^* (e_i^*)$ for each $i=1,\ldots, N$, we rewrite $T$ as $T(x^*) = \sum_{i=1}^{N} x^* (x_i) e_i$ for every $x^* \in E^*$. From the assumption that $E$ has the metric $\pi$-property, given $\eps >0$, there exists a norm one finite rank projection $S$ on $E$ so that $\| S (x_i) - x_i \| < \eps$ for each $i=1,\ldots, N$. For fixed $x^* \in B_{E^*}$, note that  
\begin{align*}
\left|e_i^* \left[ T(S^* (x^*)) - T(x^*) \right]\right| = |(S^* (x^*)) (x_i) - x^* (x_i)| \leq \| S (x_i) - x_i \| < \eps 
\end{align*}
for all $i = 1,\ldots, N$; hence $\sup_{1\leq i \leq N} |e_i^* (T(S^* (x^*)) - T(x^*))| < \eps$. This shows that $\| (T\circ S^*   )(x^*) - T(x^*)\| < C \eps$ for some constant $C >0$ as any two norms on a finite dimensional Banach space are equivalent. Since $x^* \in B_{E^*}$ is chosen arbitrarily, this completes the proof. 
\end{proof}

\begin{proof}[Proof of Theorem \ref{thm:metric_pi}]
Let $f$ be an element of $\A_{w^* u } (B_{E^*}; F^*)$. Then it can be extended weak$^*$ continuously to $\overline{B}_{E^*}$. From the assumption that $E$ has the metric $\pi$-property, $E$ has, in particular, the AP. By arguing as in \cite[Theorem 5.2]{ACG95}, where it is proved for scalar-valued functions, we can observe that that $f$ is the uniform limit on $\overline{B}_{E^*}$ of weak$^*$ continuous finite type polynomials from $E^*$ to $F^*$. For this reason, we shall prove the announced statement for weak$^*$ continuous finite type polynomials. 

Let $P$ be a weak$^*$ continuous finite type polynomials from $E^*$ to $F^*$. Write $P = \sum_{k=0}^N P_k$, where each $P_k$ is a $k$-homogeneous polynomial which is a linear combination of products of weak$^*$ continuous linear functionals on $E^*$. For fixed $k \in \{1, \ldots, N\}$, consider $T_k : E^* \rightarrow \Pol (^{k-1} E^*; F^*)$ given by 
\[
T_k (x^*) (z^*) = \widecheck{P}_k (x^*, z^*, \ldots, z^*),
\]
where $\widecheck{P}_k$ denotes the symmetric $k$-linear mapping from $E^* \times \cdots \times E^*$ to $F^*$ associated to $P_k$.  
It is clear that $T_k$ is a finite rank operator. For simplicity, assuming that $P_k$ is of the form $\sum_{j=1}^m x_i^k \otimes y_i^*$ for some $x_1,\ldots, x_m \in E$ and $y_1^*,\ldots, y_m^* \in F^*$, then 
\[
T_k (x^*) (z^*) = \sum_{i=1}^m x^*(x_i) z^* (x_i)^{k-1} y_i^*.
\]
Having in mind the following isometric isomorphism (see \cite{CLM})
\[
\Pol(^{k-1} E^*; F^*) = \left( \left( \widehat{\otimes}_{k-1,s,\pi} E^*\right) \widehat{\otimes}_{\pi} F \right)^*,  
\]
we have that 
\[
\left\langle (\otimes^N \varphi  )\otimes y, T_k (x^*) \right\rangle = \sum_{i=1}^m x^* (x_i) \varphi (x_i)^{k-1} y_i^* (y)
\]
for every $\varphi \in E^*$ and $y \in F$. Thus, if $(x_\alpha^*)$ is a net in $E^*$ norm bounded by $C>0$ which converges weak$^*$ to $x_\infty^*$ in $E^*$, then 
\[
\left\langle (\otimes^N \varphi  )\otimes y, T_k (x_\alpha^*) \right\rangle \rightarrow \left\langle (\otimes^N \varphi   )\otimes y, T_k (x_\infty^*) \right\rangle
\]
Recall that any element in $ \left( \widehat{\otimes}_{k-1,s,\pi} E^*\right) \widehat{\otimes}_{\pi} F$ can be approximated by a finite sum of basic tensors and observe that 
\[
\sup_{\alpha} \| T_k (x_\alpha^*)\| \leq C \| \widecheck{P}_k \| \leq C \frac{k^k}{k!} \| P \| 
\] 
where the polarization inequality and Cauchy integral formula are used (see, for instance, \cite[Proposition 1.8 and Proposition 3.2]{Dineen}). This shows that $T_k (x_\alpha^*)$ converges weak$^*$ to $T_k (x_\infty^*)$. Consequently, $T_k$ is bounded-weak$^*$ to weak$^*$ continuous. 

Let $T : E \rightarrow \oplus_{k=1}^{N-1} \Pol(^{k-1} E^* ; F^*)$ be the direct sum of $T_1, \ldots, T_N$. As we observed that each $T_k$ is a bounded-weak$^*$ to weak$^*$ continuous finite rank operator, it is clear that $T$ is also of finite rank and is bounded-weak$^*$ to weak$^*$ continuous. By Lemma \ref{lem:bdd_weak_star}, given $\eps >0$, there exists a norm one finite rank projection $S$ on $E$ such that $\| T - T \circ S^* \| < \eps$. Set each symmetric $k$-linear mapping $A_k := \widecheck{P}_k \circ (S^*, \ldots, S^*)$ and consider its corresponding $k$-homogeneous polynomial $Q_k$. 
Note that each $Q_k$ is again a linear combination of products of weak$^*$ continuous linear functionals on $E^*$.
Arguing as in the proof of \cite[Theorem 3.3]{AAGM}, we can obtain that $Q:= P_0 + \sum_{k=1}^N Q_k$ satisfies that $\| Q-P \| < N 4^N \eps$ and $Q = Q \circ S^*$. This implies that $Q$ attains its norm as $S^*$ is a norm one finite rank projection. 
\end{proof} 

It is known that the metric $\pi$-property is satisfied by a large class of Banach spaces. For instance, the following Banach spaces satisfy the property (for the complete proof, see \cite[Example 4.12]{DJRR}). 

\begin{enumerate}
\setlength\itemsep{0.3em}
\item Banach spaces with a finite-dimensional decomposition with the decomposition constant $1$;
\item $L_p (\mu)$-spaces for any $1 \leq p < \infty$ and measure $\mu$;
\item $L_1$-predual spaces; 
\item $E \otimes_a F$ whenever $E$ and $F$ satisfy the metric $\pi$-property and $|\cdot|_a$ is an absolute sum; 
\item $E \widehat{\otimes}_\alpha F$ whenever $E$ and $F$ satisfy the metric $\pi$-property and $\alpha$ is a uniform cross norm (see \cite[Section 12]{DF} for background). 
\end{enumerate}  

\begin{cor}
If $E$ is one of the above Banach spaces (1)--(5), then the set of norm-attaining elements in $\A_{w^* u } (B_{E^*} ;F^*)$ forms a dense subset for every Banach space $F$. 
\end{cor}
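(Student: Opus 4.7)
The corollary is essentially a direct consequence of Theorem \ref{thm:metric_pi} combined with the observation that each of the listed classes (1)--(5) consists of Banach spaces enjoying the metric $\pi$-property. Thus the plan is simply to verify the latter fact for each class and then apply the theorem.

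My first step would be to appeal to the cited reference \cite[Example 4.12]{DJRR}, where a complete proof that each of the Banach spaces (1)--(5) has the metric $\pi$-property is provided. For (1), the net of finite-rank projections coming from the initial segments of the finite-dimensional decomposition with decomposition constant $1$ already witnesses the property by definition. For (2), one uses conditional expectation operators onto finite $\sigma$-subalgebras, which are contractive finite-rank projections on $L_p(\mu)$ converging strongly to the identity (for $p = \infty$ one works instead with the $L_1$-preduals, but the statement is formulated for $1 \leq p < \infty$ with the convention made in that reference). For (3), the metric $\pi$-property of $L_1$-preduals is classical. For (4) and (5), the property is preserved under absolute sums and uniform cross norms: given norm-one finite-rank projections $S_\alpha$ on $E$ and $T_\beta$ on $F$, the tensor products $S_\alpha \otimes T_\beta$ (respectively their completions on $E \widehat{\otimes}_\alpha F$) are again norm-one finite-rank projections that converge strongly to the identity.

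Having noted that each such $E$ has the metric $\pi$-property, the final step is the direct application of Theorem \ref{thm:metric_pi}, which yields that for every Banach space $F$, the set of norm-attaining elements in $\A_{w^* u}(B_{E^*}; F^*)$ is dense in $\A_{w^* u}(B_{E^*}; F^*)$. Since no additional ingredients beyond the previous theorem are required, the corollary follows.

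There is no genuine obstacle here; the only mildly delicate point is the stability of the metric $\pi$-property under absolute sums and uniform cross norms in items (4) and (5), but this is routine and is dispatched by the reference to \cite[Example 4.12]{DJRR}. Accordingly I would write the proof as a brief two-line verification, citing the example in \cite{DJRR} for the metric $\pi$-property and then invoking Theorem \ref{thm:metric_pi}.
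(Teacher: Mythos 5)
Your proposal is correct and matches the paper's intent exactly: the corollary is stated without proof precisely because it is the immediate combination of Theorem \ref{thm:metric_pi} with the fact, cited from \cite[Example 4.12]{DJRR}, that each of the spaces (1)--(5) has the metric $\pi$-property. Your brief verifications of the individual cases are a harmless (and accurate) elaboration of what the reference already supplies.
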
 

\section{Holub's results in the context of holomorphic functions}\label{Holub} 

J.~R.~Holub observed in \cite{Holub} that if $E$ and $F$ are reflexive Banach spaces and one of them has the approximation property (for short, AP), then every element of $\Lin (E, F)$ attains its norm if and only if every element of $\Lin (E,F)$ is a compact operator if and only if $\Lin (E, F)$ is reflexive. For $N$-homogeneous polynomials, R.~Alencar obtained in \cite{Alencar1} a characterization for $\Pol(^N E; F)$ to be reflexive under the assumption that $E$ and $F$ both are reflexive and have the AP. Afterwards, it is generalized by J.~A.~Jaramillo and L.~A.~Moraes in \cite{JM} where only the domain space $E$ is assumed to have the AP. One year later, J.~Mujica in \cite{Mujica1} improved this result by replacing the assumption AP on $E$ by compact approximation property (for short, CAP). The main goal of this section is to present a further improvement of the result of J.~Mujica by removing the reflexivity assumption on the range space $F$ by making use of the result in \cite{DJM}, where the authors generalized Holub's aforementioned result by obtaining equivalent statements for every element of $\Lin (E,F)$ attain its norm without assuming $F$ to be reflexive.

For Banach spaces $E$ and $F$, we denote by $\Pol_f (^N E; F)$ the subspace of $\Pol(^N E; F)$ generated by finite-type polynomials from $E$ to $F$. Let $\Pol_w (^N E; F)$ denote the subspace of all $P \in \Pol (^N E; F)$ which are weak to norm continuous on bounded subsets of $E$. 



\begin{theorem}\label{thm:holub_poly}
Let $E$ be a reflexive Banach space, $F$ an arbitrary Banach space and let $N \in \N$. 
Consider the following statements. 
\begin{enumerate}[label=(\alph*)] 
\setlength\itemsep{0.4em}
\item $\Pol (^N E; F) = \overline{ \Pol_f (^N E; F)}$.
\item $\Pol(^N E; F) = \Pol_w (^N E; F)$.
\item $\Pol(^N E; F) = \NA \Pol (^N E; F)$.
\item $ (\Pol(^N E; F), \| \cdot \|)^* = (\Pol(^N E; F), \tau_c)^*$.
\item $\Pol(^N E; F)$ is reflexive. 
\end{enumerate} 
Then the following implications hold. 
\begin{center}
	\begin{tikzpicture}[scale=0.95, baseline=0cm]
	\tikzstyle{caixa} = [rectangle, rounded corners, minimum width=.7cm, minimum height=.7cm, text centered, draw=black]
	\tikzstyle{dfletxa} = [double equal sign distance,-implies, shorten >= 2pt, , shorten <= 2pt]
		\tikzstyle{dfletxalr} = [double equal sign distance, implies-implies, shorten >= 2pt, , shorten <= 2pt]
	\tikzstyle{dfletxadotted} = [double equal sign distance, dashed, -implies, shorten >= 2pt, , shorten <= 2pt]
	
	\node (A) at (-7,0) {$(a)$};
	\node (B) at (-7,-1.5) {$(b)$};
	\node (C) at (-7,-3) {$(c)$};
	\node (D) at (0,-3)  {$(d)$};
	\node (E) at (4,-3) {$(e)$};
	
	\draw [dfletxa] (A) -- (B);
	\draw [dfletxa] (B) -- (C);
	%
	
	\draw[dfletxadotted, ->, >=implies, rounded corners] (C.east)
	--  (-2,-3) node[below, pos=.65] {$ \scriptstyle \text{If } E \text{ or } F \text{ has the CAP}, \text{ or } F \text{ is reflexive}$}
	-- (D.west);
	
	\draw[dfletxadotted, ->, >=implies, rounded corners] (D.north)
	-- (0,-2)
	-- node[above, pos=1] {$ \scriptstyle \text{If } E \text{ has the CAP}$} (0,-1.5)
	-- (B.east);
	
	\draw[dfletxadotted, ->, >=implies, rounded corners] (D)  to   [bend right]   node[below] {$\scriptstyle \text{ \text{If} F \text{is reflexive}}$} (E);
	\draw[dfletxa, ->, >=implies, rounded corners] (E)  to   [bend right]   node[below] {} (D);

	\end{tikzpicture}	
\end{center}

\end{theorem}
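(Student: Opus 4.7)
My plan is to route all implications through Ryan's isometric linearization $\Pol(^N E; F) \cong \Lin(\Nsten E, F)$ via $P \leftrightarrow T_P$, together with Mujica's fact that when $E$ is reflexive with CAP, $\Nsten E$ inherits both reflexivity and CAP; under this correspondence, $P$ attains its norm at $x_0 \in \overline{B}_E$ exactly when $T_P$ attains its norm at $\otimes^N x_0$. The implications $(a) \Rightarrow (b) \Rightarrow (c)$ are soft: for $(a) \Rightarrow (b)$ each basic finite-type polynomial $x \mapsto \varphi(x)^N y$ with $\varphi \in E^*$ and $y \in F$ is weak-to-norm continuous on $E$, so $\Pol_f(^N E; F) \subseteq \Pol_w(^N E; F)$, and $\Pol_w$ is norm-closed. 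For $(b) \Rightarrow (c)$ reflexivity of $E$ makes $\overline{B}_E$ weakly compact, so weak-to-norm continuity of $P$ on $\overline{B}_E$ sends it to a norm-compact set and forces $\|P\|$ to be attained.

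For the central implication $(c) \Rightarrow (d)$, hypothesis (c) becomes $\Lin(\Nsten E, F) = \NA \Lin(\Nsten E, F)$ after the Ryan translation, since every operator in $\Lin(\Nsten E, F)$ is of the form $T_P$. Under each of the alternative hypotheses---$E$ has CAP (which by Mujica promotes to CAP of $\Nsten E$), $F$ has CAP, or $F$ is reflexive---I would invoke the Holub-type extension in \cite{DJM} on the operator space $\Lin(\Nsten E, F)$ to conclude every such operator is compact. Translating back, every $P \in \Pol(^N E; F)$ is a compact polynomial, which by an Ascoli-type equicontinuity argument (using the weak compactness of $\overline{B}_E$) means $\tau_c$ and the weak topology generated by evaluations $P \mapsto y^*(P(x))$ agree on norm-bounded subsets of $\Pol(^N E; F)$; combined with Mujica's identification of $(\Pol(^N E; F), \tau_c)^*$ via point-evaluations, this collapses the norm and $\tau_c$ duals.

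For $(d) \Rightarrow (b)$ under $E$ has CAP, I would take a net $(K_\alpha)$ of compact operators on $E$ with $K_\alpha \to \mathrm{Id}_E$ strongly; then $P \circ K_\alpha$ is a compact polynomial that $\tau_c$-converges to $P$. Equality of duals in (d) implies $\tau_c$-closures and norm-closures of convex sets coincide (Hahn--Banach), placing $P$ in the norm-closure of $\Pol_w(^N E; F)$, which is itself norm-closed, so $P \in \Pol_w(^N E; F)$. For $(e) \Rightarrow (d)$, reflexivity of $\Pol(^N E; F)$ combined with Mujica's description of $(\Pol(^N E; F), \tau_c)^*$ as the natural predual built from point-evaluations forces this predual to coincide with the full dual. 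For the conditional $(d) \Rightarrow (e)$ with $F$ reflexive, the compactness extracted en route in $(c) \Rightarrow (d)$ together with reflexivity of both $\Nsten E$ and $F$ yields reflexivity of $\mathcal{K}(\Nsten E, F) \cong \Pol(^N E; F)$ by Holub's classical theorem. The main obstacle I expect is in $(c) \Rightarrow (d)$: carefully verifying that the hypotheses of \cite{DJM} apply on $\Nsten E$ rather than on $E$ itself, and bridging from operator compactness on $\Nsten E$ to equality of the norm and $\tau_c$ duals on the polynomial side without additional assumptions on the range space $F$.
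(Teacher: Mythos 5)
Your overall architecture (Ryan's linearization $\Pol(^N E;F)\cong\Lin(\Nsten E,F)$, then \cite{DJM} and Mujica) is the same as the paper's, and your treatments of (a) $\Rightarrow$ (b) $\Rightarrow$ (c) and of (d) $\Rightarrow$ (b) are essentially the paper's arguments. However, your proposal rests on a false auxiliary claim: it is \emph{not} true that $\Nsten E$ inherits reflexivity from a reflexive $E$ with the CAP. Projective symmetric tensor powers of reflexive spaces are typically non-reflexive (e.g.\ $\widehat{\otimes}_{2,s,\pi}\,\ell_2$ is a non-reflexive dual space, since $\Pol(^2\ell_2)$ contains non-compact polynomials). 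Only the CAP passes to $\Nsten E$ (by \c{C}al\i\c{s}kan's result, not Mujica's). The reflexivity of $\Nsten E$, which you need before \cite{DJM} can be applied at all, must be extracted from hypothesis (c): every operator in $\Lin(\Nsten E,F)$ is some $T_P$, norm-attainment of $P$ at $x_0$ forces norm-attainment of $T_P$ at $\otimes^N x_0$, and tensoring a functional on $\Nsten E$ with a fixed unit vector of $F$ then shows every functional on $\Nsten E$ attains its norm, so James' theorem applies. This is exactly how the paper begins the proof of (c) $\Rightarrow$ (d).

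The second genuine gap is in the branches where no approximation property is assumed. In (c) $\Rightarrow$ (d) under ``$F$ reflexive'', and in (d) $\Rightarrow$ (e) (which is conditional \emph{only} on $F$ being reflexive), you propose to conclude that every operator on $\Nsten E$ is compact and then invoke Holub; but both the compactness conclusions of \cite{DJM} and Holub's classical theorem require an approximation hypothesis, which is precisely what is missing in these branches, and moreover (d) $\Rightarrow$ (e) must be derived from (d) alone, so ``compactness extracted en route in (c) $\Rightarrow$ (d)'' is not available there. The paper avoids compactness entirely: for (c) $\Rightarrow$ (d) with $F$ reflexive it identifies $\Lin(\Nsten E,F)=((\Nsten E)\pten F^*)^*$, observes that (c) makes every functional on $(\Nsten E)\pten F^*$ norm-attaining, applies James to get reflexivity of the operator space, and then uses \cite[Lemma 2.3]{Mujica1} for the dual equality; for (d) $\Rightarrow$ (e) it uses the canonical topological identification of $(\Nsten E)\pten F^*$ with the strong dual of $(\Lin(\Nsten E,F),\tau_c)$ together with (d). Your Ascoli-type bridge from compactness to the dual equality in the CAP branches is also vaguer than the paper's direct appeal to \cite[Theorem B]{DJM} and \cite[Theorem 3.2]{Mujica1}, but that part is plausibly repairable; the two issues above are not, without changing the argument.
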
 

\begin{proof}
(a) $\Rightarrow$ (b): It follows from the well-known results that $\Pol_f (^N E; F) \subset \Pol_w (^N E; F)$ and $\Pol_w (^N E; F)$ is closed in $\Pol(^N E; F)$ (see, for instance, \cite[Proposition 2.4]{AP}). 

(b) $\Rightarrow$ (c): Let $P \in \Pol (^N E; F)$ and take a sequence $(x_n)$ in $S_E$ so that $\| P (x_n) \| \rightarrow \|P \|$. By using the Eberlein-\v{S}mulyan theorem \cite[Theorem 3.109]{FHHMZ}, passing to a subsequence, if necessary, we may assume that $(x_n)$ converges weakly to some $x_0$ in $\overline{B}_E$. As $P$ is weakly continuous, we conclude that $P$ attains its norm at $x_0$; thus (c) follows. 

(c) $\Rightarrow$ (d):
From the assumption, we have that every element of $\Lin (\Nsten E, F)$ attains its norm; hence $\Nsten E$ is reflexive. Suppose first that $E$ or $F$ has the CAP. 
If $E$ has the CAP, then by \cite[Corollary 7]{Caliskan}, so does $\Nsten E$. Thus, \cite[Theorem B]{DJM} shows that 
$(\Lin  (\Nsten E, F) , \| \cdot \|)^* = (\Lin  (\Nsten E, F) , \tau_c)^*$. Now, \cite[Theorem 3.2]{Mujica1} proves that (d) holds. Next, assume that $F$ is reflexive. Then from the fact that every element of $\Lin (\Nsten E, F)$ attains its norm, we can deduce that $\Lin (\Nsten E, F) = ((\Nsten E) \pten F^*)^*$ is reflexive by James' reflexivity theorem. Now, \cite[Lemma 2.3]{Mujica1} shows that $(\Lin  (\Nsten E, F) , \| \cdot \|)^* = (\Lin  (\Nsten E, F) , \tau_c)^*$; so the conclusion is fulfilled. 

(d) $\Rightarrow$ (b): This follows from the proof of $(4) \Rightarrow (1)$ of \cite[Theorem 3.1]{Mujica1}.     

(d) $\Rightarrow$ (e): 
As we have seen, (d) implies $(\Lin  (\Nsten E, F) , \| \cdot \|)^* = (\Lin  (\Nsten E, F) , \tau_c)^*$. Notice that there is a canonical isomorphism 
\[
(\Nsten E) \pten F^* = (\Lin (\Nsten E, F), \tau_c)^*
\]
(see, for instance, \cite[Section 5.5, page 62]{DF}), which actually yields a topological isomorphisms:
\begin{equation}\label{eq:topological_isomorphism}
(\Nsten E) \pten F^* = (\Lin (\Nsten E, F), \tau_c)_b^*,
\end{equation}
where the $(\Lin (\Nsten E, F), \tau_c)_b^*$ is the strong dual space of $(\Lin (\Nsten E, F), \tau_c)$. Since $F$ is reflexive, $\Lin (\Nsten E ,F)$ is isometrically isomorphic to $((\Nsten E) \pten F^*)^*$. This along with \eqref{eq:topological_isomorphism} shows that $\Lin (\Nsten E, F)$ is reflexive (for the detailed proof, see the proof of $(3) \Leftrightarrow (4)$ of \cite[Theorem 2.1]{Mujica1}, where the reflexivity assumption on a domain space was not used). 

(e) $\Rightarrow$ (d): As $\Lin (\Nsten E, F)$ is reflexive, \cite[Lemma 2.3]{Mujica1} and \cite[Theorem 3.2]{Mujica1} show that (d) holds. 
\end{proof} 

\begin{remark}\label{rem:wsc_continuity}
Let us denote by $\Pol_{wsc} (^N E; F)$ the space of all $N$-homogeneous polynomials which map weakly convergent sequences in $E$ to convergent sequences in $F$. Let us also denote by $\Pol_{wu} (^N E; F)$ the subspace of all $P \in \Pol(^N E; F)$ which are weakly uniformly continuous on bounded sets. 
Let us mention that it is known \cite[Theorem 2.9]{AHV} that $\Pol_{wu} (^N E; F)$ coincides with $\Pol_w (^N E; F)$. Notice from the proof of (b) $\Rightarrow$ (c) that we can see that what is actually needed is the weakly sequentially continuity of polynomials in $\Pol (^N E; F)$. 
As a matter of fact, it is shown in \cite[Proposition 2.12]{AHV} that if a Banach space $E$ does not contain a copy of $\ell_1$, then $\Pol_{wu}(^N E; F) = \Pol_{wsc} (^N E; F)$ for any Banach space $F$; hence $\Pol_w (^N E; F) = \Pol_{wsc} (^N E; F)$. Consequently, if a reflexive Banach space $E$ has the CAP, then $\Pol (^N E; F) = \Pol_w (^N E; F)$ if and only if $\Pol (^N E; F) = \Pol_{wsc} (^N E;  F)$, which is equivalent to that $\Pol (^N E; F)$ is reflexive whenever $F$ is reflexive. On the other hand, it is worth to mentioning that the proof of (b) $\Rightarrow$ (c) can be obtained from a more general observation, that is, any \emph{weak to norm continuous function} $f$ from a reflexive Banach space $E$ to a Banach space $F$ attains its norm. Indeed, the map $x \mapsto \| f(x)\|$ is a weakly continuous function from $E$ to $\mathbb{R}$. Using the weak compactness of $\overline{B}_E$, we deduce that there exists $x_0 \in \overline{B}_E$ so that $\|f \| = \| f(x_0)\|$. 
\end{remark}

Given Banach spaces $E$ and $F$, we denote by $\Hb (E; F)$ the space of entire functions $f : E \rightarrow F$ which are bounded on bounded subsets of $E$. Let $\Ho_{wu} (E; F)$ be the subspace of $\Hb (E; F)$ of all entire functions $f$ which are weakly uniformly continuous on each bounded subset of $E$. In \cite{JM}, J.~A.~Jaramillo and L.~A.~Moraes extended Holub's result to the context of vector-valued holomorphic mappings by showing that $\Hb (E;F)$ is reflexive if and only if $\Hb (E; F) = \Ho_{wu} (E;F)$ if and only if for every $f \in \Hb (E; F)$ there is $x \in S_E$ so that $\|f(x)\| = \sup \{ \| f (u) \| : u \in \overline{B}_E \}$ provided that both $E$ and $F$ are reflexive and $E$ has the AP. Let us mention that, earlier than the appearance of \cite{JM}, R.~Ryan in \cite{Ryan_thesis} and Y.~S.~Choi and S.~G.~Kim in \cite{CK95} obtained equivalent statements concerning the reflexivity of spaces of scalar-valued holomorphic functions. Using Theorem \ref{thm:holub_poly}, we improve the result of J.~A.~Jaramillo and L.~A.~Moraes by weakening the AP  assumption on $E$ and removing the reflexivity assumption on $F$.

\begin{theorem}\label{thm:holub_holo}
Let $E$ be a reflexive Banach space and $F$ be an arbitrary Banach space. Consider the following statements. 
\begin{enumerate}[label=(\alph*)] 
\setlength\itemsep{0.4em}
\item $\Pol(^N E; F) = \Pol_w(^N E; F)$ for every $N \in \N$. 
\item $\Hb (E; F) = \Ho_{wu} (E; F)$.
\item For every $f \in \Hb (E; F)$ and $r >0$, there exists $x_0 \in r S_E$ such that $\| f (x_0) \| = \sup \{ \| f (u) \| : u \in r \overline{B}_E \}$. 
\item $\Pol(^N E; F) = \NA \Pol (^N E; F)$ for every $N \in \N$. 
\item $\Hb (E;F)$ is reflexive.  
\end{enumerate} 
Then the following implications hold.
\begin{center}
\begin{tikzcd}
(b) \arrow[rrr, Rightarrow] &  &  & (c) \arrow[d, Rightarrow]                                                                                              &  &  &     \\
(a) \arrow[u, Rightarrow]   &  &  & (d) \arrow[dashed, lll, "\text{If } E \text{ has the CAP}", Rightarrow] \arrow[dashed, rrr, "\text{If } F \text{ is reflexive}", Rightarrow, bend left] \arrow[dashed, rrr, "\text{If } E  \text{ has the CAP}"', Leftarrow, bend right] &  &  & (e)
\end{tikzcd}
\vspace{3mm}
\end{center} 
\end{theorem}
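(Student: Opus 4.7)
The plan is to route the formal implications $(a) \Rightarrow (b) \Rightarrow (c) \Rightarrow (d)$ through Taylor expansion plus the maximum modulus principle, and to reduce each dashed implication to Theorem \ref{thm:holub_poly} applied level-by-level to the $N$-homogeneous parts. For $(a) \Rightarrow (b)$, I would expand any $f \in \Hb(E;F)$ as $f = \sum_{N \geq 0} P_N f$; Cauchy estimates give uniform convergence on every bounded set, and (a) combined with Remark \ref{rem:wsc_continuity} gives each $P_N f \in \Pol_w(^N E;F) = \Pol_{wu}(^N E;F)$, so $f$ is a uniform limit on bounded sets of weakly uniformly continuous functions, hence $f \in \Ho_{wu}(E;F)$. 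For $(b) \Rightarrow (c)$, fix $r>0$: weak uniform continuity of $f$ on $r\overline{B}_E$ makes $\|f(\cdot)\|$ weakly continuous on the weakly compact ball, so the supremum is attained at some $x_0 \in r\overline{B}_E$; if $\|x_0\|<r$, the vector-valued Maximum Modulus Principle (applied after composing with a scalar functional norming $f(x_0)$) forces $f$ to be constant on the connected set $rB_E$, and attainment on $rS_E$ is then automatic. The step $(c) \Rightarrow (d)$ is immediate since $\Pol(^N E;F) \subseteq \Hb(E;F)$ and (c) at $r=1$ provides the required norm-attainment.

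For the dashed implications I would invoke Theorem \ref{thm:holub_poly} at every level $N$. Assuming (d), statement $(c)$ of Theorem \ref{thm:holub_poly} holds for each $N$; if $E$ has the CAP, the chain $(c) \Rightarrow (d) \Rightarrow (b)$ of that theorem recovers (a); if $F$ is reflexive, the chain $(c) \Rightarrow (d) \Rightarrow (e)$ of that theorem makes each $\Pol(^N E;F)$ reflexive, and the remaining task is to upgrade this to reflexivity of the Fr\'echet space $(\Hb(E;F),\tau_b)$, which I would handle following \cite[Theorem 1.5]{JM} by presenting $\Hb(E;F)$ as a suitable projective limit of Banach spaces built from the reflexive spaces $\Pol(^k E;F)$. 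Conversely, for $(e) \Rightarrow (d)$ under the CAP hypothesis on $E$, the continuous Taylor coefficient projection $f \mapsto P_N f$ makes $\Pol(^N E;F)$ a complemented, hence reflexive, subspace of $\Hb(E;F)$; then the chain $(e) \Rightarrow (d) \Rightarrow (b) \Rightarrow (c)$ of Theorem \ref{thm:holub_poly} (with the CAP of $E$ used in the middle step) delivers norm attainment for every $N$.

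The main obstacle is the reflexivity upgrade in $(d) \Rightarrow (e)$: although Theorem \ref{thm:holub_poly} already yields reflexivity of each $\Pol(^N E;F)$, converting this into reflexivity of the Fr\'echet space $\Hb(E;F)$ calls for a careful analysis of its topology and its predual, for which I would follow Mujica's linearization machinery and the Fr\'echet-space arguments in \cite{JM}. Everything else is essentially bookkeeping once the polynomial-level theorem is in hand.
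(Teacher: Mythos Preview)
Your proposal is correct and follows essentially the same route as the paper: Taylor expansion for $(a)\Rightarrow(b)$ (the paper packages this as \cite[Proposition~1.5]{Aron}), weak compactness plus the maximum principle for $(b)\Rightarrow(c)$, and level-by-level application of Theorem~\ref{thm:holub_poly} for all the dashed arrows. For the step you flag as the main obstacle, the paper simply invokes \cite[Corollary~3]{AnsemilPonte}, which yields reflexivity of $\Hb(E;F)$ directly from reflexivity of every $\Pol(^N E;F)$, so no additional Fr\'echet-space analysis beyond that citation is needed.
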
  
 
\begin{proof}
(a) $\Rightarrow$ (b): It is known that if $f \in \Hb (E; F)$ has the Taylor expansion $\sum_{N=0}^\infty P_N$, then $f \in \Ho_{wu} (E; F)$ if and only if $P_N \in \Pol_{wu} (^N E; F)$ for every $N \in \N$ (see \cite[Proposition 1.5]{Aron}). Recall from \cite[Theorem 2.9]{AHV} that $\Pol_w (^N E; F) = \Pol_{wu} (^N E; F)$; hence (a) implies (b).  

(b) $\Rightarrow$ (c): Let $f \in \Hb (E; F)$ and $r >0$ be given. Arguing as in the proof of (b) $\Rightarrow$ (c) in Theorem \ref{thm:holub_poly}, we obtain an element $x_0 \in r S_E$ such that $\| f(x_0) \| = \sup \{ \| f (u) \| : u \in r \overline{B}_E \}$. 

(c) $\Rightarrow$ (d): It is clear. 

(d) $\Rightarrow$ (a): It follows from Theorem \ref{thm:holub_poly}. 

(d) $\Rightarrow$ (e): If $F$ is reflexive, then as in the proof of (c) $\Rightarrow$ (d) in Theorem \ref{thm:holub_poly}, we have that $\Pol (^N E; F)$ is reflexive for every $N \in \N$. Then \cite[Corollary 3]{AnsemilPonte} proves that $\Hb (E; F)$ is reflexive. 

(e) $\Rightarrow$ (d): Suppose that $E$ has the CAP. Since $\Pol (^N E; F)$ is a closed subspace of $\Hb (E; F)$, we have that $\Pol (^N E; F)$ is reflexive for every $N \in \N$. In particular, this implies that $F$ must be reflexive. Now, (d) follows from Theorem \ref{thm:holub_poly}. 
\end{proof} 


\section{Improvements of some results on polynomial reflexivity}\label{Farmer} 

In this section, we shall observe that some of results on polynomial reflexivity due to J.~Farmer which rely on the AP of a domain space can be generalized to the case when the domain space has the CAP thanks to Theorem \ref{thm:holub_poly}. 

Given $N \in \N$, recall that a Banach space $E$ is said to be \emph{$\mathcal{P}_N$-reflexive} if the space $\Pol(^N E)$ is reflexive. If this is true for every $N\in \N$, then $E$ is said to be \emph{polynomially reflexive}. 
It is observed by R.~Alencar, R.~M.~Aron and S.~Dineen  \cite[Theorem 6]{AAD} that the Tsirelson space $T^*$ (for its definition, see \cite{Tsi}) is polynomially reflexive. Afterwards, this result was generalized by Farmer in \cite{Farmer94} by using the theory of spreading models. As we will not use the theory of spreading models, we do not give its definition but refer to \cite{B, BS1}.   

As promised, we present the following result which generalizes \cite[Theorem 1.3]{Farmer94}. Since the proof can be obtained by following the original proof step by step but having in mind from Remark \ref{rem:wsc_continuity} that for a reflexive space $E$ with CAP, the reflexivity of $\Pol(^N E)$ is equivalent to that $\Pol(^N E) = \Pol_{wsc} (^N E)$, we omit the proof.


\begin{proposition}
Suppose that $E$ is a reflexive space. Then if no spreading model built on a weakly null sequence has a lower $q$-estimate for any $q < \infty$, then any subspace of $E$ with the CAP is polynomially reflexive. 
\end{proposition}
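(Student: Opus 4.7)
The plan is to reduce polynomial reflexivity of any CAP subspace of $E$ to the statement that every scalar-valued homogeneous polynomial on such a subspace is weakly sequentially continuous, and then to derive this property from the spreading-model hypothesis by a Farmer-style contradiction argument.

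First I would fix a subspace $Y \subseteq E$ with the CAP. Since $E$ is reflexive, so is $Y$, and in particular $Y$ contains no isomorphic copy of $\ell_1$. By Remark \ref{rem:wsc_continuity} applied with $F = \mathbb{C}$, the equality $\Pol(^N Y) = \Pol_{wsc}(^N Y)$ is equivalent to the reflexivity of $\Pol(^N Y)$. Hence it is enough to show that for every $N \in \N$, every $P \in \Pol(^N Y)$ is weakly sequentially continuous.

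Next I would proceed by contradiction. Suppose some $P \in \Pol(^N Y)$ fails to be weakly sequentially continuous; then one can extract a normalized weakly null sequence $(y_n) \subseteq Y$ and some $\delta > 0$ such that $|P(y_n)| \geq \delta$ for every $n$. By the Brunel-Sucheston construction, after passing to a subsequence, $(y_n)$ generates a spreading model $(e_n)$ inside some Banach space $\widetilde{Y}$; since $(y_n)$ is weakly null, $(e_n)$ is $1$-unconditional.

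The heart of the proof, and the step I expect to be the main obstacle, is to transfer the non-triviality of $P$ along $(y_n)$ into a quantitative lower estimate on the spreading model. Concretely, working with the symmetric $N$-linear form $\widecheck{P}$ associated to $P$ and using the polarization formula together with iterated weak limits of the form
\[
\lim_{n_1 \to \infty} \cdots \lim_{n_N \to \infty} \widecheck{P}(y_{n_1}, \ldots, y_{n_N}),
\]
Farmer's argument in \cite{Farmer94} should produce a constant $c > 0$ such that
\[
\left\| \sum_{i=1}^{m} a_i e_i \right\|_{\widetilde{Y}} \geq c \left( \sum_{i=1}^{m} |a_i|^{N} \right)^{1/N}
\]
for every $m \in \N$ and every choice of scalars $(a_i)$. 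This is a lower $N$-estimate for a spreading model built on a weakly null sequence, with $N < \infty$, which contradicts the hypothesis. The delicate part is the simultaneous extraction of enough subsequences so that all of the iterated limits of $\widecheck{P}$ along block vectors exist and are controlled from below; this is exactly where Farmer's combinatorial and analytic techniques from \cite{Farmer94} are needed, and it is the step I expect to require the most care in adapting to the CAP setting (the point being that here one never uses the AP of $Y$, only reflexivity and the wsc-equivalence coming from Remark \ref{rem:wsc_continuity}).
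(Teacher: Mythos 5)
Your proposal is correct and follows exactly the route the paper intends: the paper omits the proof, stating only that one follows Farmer's original argument for \cite[Theorem 1.3]{Farmer94} step by step, with the sole modification that Remark \ref{rem:wsc_continuity} (rather than the AP) supplies the equivalence between reflexivity of $\Pol(^N Y)$ and $\Pol(^N Y)=\Pol_{wsc}(^N Y)$ for a reflexive subspace $Y$ with the CAP. Your reduction to weak sequential continuity, the contradiction via a weakly null sequence on which a homogeneous polynomial is bounded below, and the resulting lower $N$-estimate on the spreading model are precisely Farmer's argument, correctly identified as independent of the AP.
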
 

Some results concerning the polynomially Schur property in \cite{Farmer94} also can be improved. 
Recall from \cite{FJ93} that a Banach space is \emph{polynomially Schur} (resp., \emph{$\mathcal{P}_N$-Schur}) if whenever a sequence converges to zero against every scalar-valued homogeneous polynomial (resp., every scalar-valued $N$-homogeneous polynomial) then it must converge to zero in norm. 

\begin{proposition}
\begin{enumerate} 
\setlength\itemsep{0.4em}
\item Suppose that $E$ is polynomially reflexive and has the CAP, and that $F$ is polynomially Schur. Then $\Pol (^N E; F) = \Pol_w (^N E; F)$ for every $N \in \N$. 
\item Given $N,M \in \N$, suppose that $E$ is $\Pol_{NM}$-reflexive and has the CAP, and that $F$ is $\Pol_N$-Schur, then  $\Pol (^M E; F) = \Pol_w (^M E; F)$. 
\end{enumerate}  
\end{proposition}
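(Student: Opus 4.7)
The plan is to reduce the vector-valued identity $\Pol(^M E; F) = \Pol_w(^M E; F)$ to the scalar-valued reflexivity supplied by Theorem \ref{thm:holub_poly}, and then upgrade weak vanishing in $F$ to norm vanishing via the polynomially Schur hypothesis on $F$. I would handle part (2) first, since part (1) is essentially its limiting case obtained by letting $N$ range over all of $\N$.

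Fix $P \in \Pol(^M E; F)$. Since $E$ is reflexive, it contains no copy of $\ell_1$, so Remark \ref{rem:wsc_continuity} yields $\Pol_w(^M E; F) = \Pol_{wsc}(^M E; F)$, and it therefore suffices to show that $P$ sends every weakly null sequence to a norm null sequence. Let $(x_n) \subset E$ be weakly null and fix $Q \in \Pol(^N F)$. The composition $Q \circ P$ lies in $\Pol(^{NM} E)$, and since $E$ is $\Pol_{NM}$-reflexive and has the CAP, Theorem \ref{thm:holub_poly} applied with scalar range $\C$ (combined with Remark \ref{rem:wsc_continuity}) gives $\Pol(^{NM} E) = \Pol_{wsc}(^{NM} E)$. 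Consequently $Q(P(x_n)) \to 0$ for every $Q \in \Pol(^N F)$, and the $\Pol_N$-Schur property of $F$ then forces $P(x_n) \to 0$ in norm, as required.

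For part (1), the same chain of implications applies with $N$ varying over all of $\N$: polynomial reflexivity of $E$ together with the CAP makes $\Pol(^k E) = \Pol_{wsc}(^k E)$ hold for every $k \in \N$, so $Q(P(x_n)) \to 0$ for every scalar polynomial $Q$ of every degree on $F$, and polynomial Schurness of $F$ closes the argument. I do not foresee a serious technical obstacle; the entire proof is a composition-of-polynomials manoeuvre, and the substantive work has already been absorbed into Theorem \ref{thm:holub_poly}. The one point worth stating explicitly is the reduction $\Pol_w = \Pol_{wsc}$ for reflexive domains, which is what bridges the topological and sequential formulations and allows the compositional argument to go through.
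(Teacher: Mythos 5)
Your proof is correct and follows essentially the same route as the paper: for a weakly null sequence $(x_n)$ you test $P(x_n)$ against scalar homogeneous polynomials $Q$ on $F$ by noting $Q\circ P$ is weakly sequentially continuous (via Theorem \ref{thm:holub_poly} and Remark \ref{rem:wsc_continuity}), then invoke the (polynomially/$\mathcal{P}_N$-)Schur property of $F$ and the identification $\Pol_{wsc}=\Pol_{w}$ from \cite{AHV}. The only cosmetic difference is in part (2), where the paper routes the degree bookkeeping through the observation that the $N$-fold projective tensor product of a $\mathcal{P}_{NM}$-reflexive space is $\mathcal{P}_M$-reflexive, whereas you compose directly to land in $\Pol(^{NM}E)$; both rest on the same reflexivity hypothesis, and your version is, if anything, more direct.
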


\begin{proof}
(1): Let $P \in \Pol (^N E; F)$ and let $(x_n)$ be a sequence in $E$ which converges weakly to zero.  As $E$ is polynomially reflexive, by Remark \ref{rem:wsc_continuity}, we can deduce that $P(x_n)$ converges to zero against every scalar-valued homogeneous polynomial on $F$. Since $F$ is polynomially Schur, we have then $P(x_n)$ converges to zero in norm; hence $P \in \Pol_{wsc} (^N E; F)$; hence $P \in \Pol_{w} (^N E; F)$ by \cite[Proposition 2.12]{AHV} and \cite[Theorem 2.9]{AHV}. 

(2): Observe that the $N$-fold projective tensor product of a $\Pol_{NM}$-reflexive space is $\Pol_M$-reflexive. Using this fact and arguing as the above (1), the conclusion follows.
\end{proof}


Let us denote by $\mathcal{A} (B_E)$ the subalgebra of $\A_u (B_E)$ which is generated by the constant functions and elements of $E^*$. Also, let us consider the open unit ball $B_E$ as a subset of $\mathcal{A}(B_E)^*$ via evaluations. From \cite[Theorem 4.1]{Farmer94}, some equivalent statements of the polynomial reflexivity are obtained by comparing the topology $\sigma (\mathcal{A} (B_E)^*, \mathcal{A} (B_E)^{**})$ with the weak topology on $rB_E$ for $r <1$ under the AP assumption on $E$. We would like to improve this result replacing the AP condition by the CAP. At this time, we give a detailed proof since we slightly modify the argument.

Observe first the following simple lemma which will be used in the proof of Theorem \ref{thm:gen_Farmer_1}. 


\begin{lemma}\label{lem:unif_conti_poly}
Let $E$ and $F$ be Banach spaces and $N \in \N$. If $T \in \mathcal{K} (E, E)$ and $P \in \Pol(^N E; F)$, then $P \circ T$ belongs to $\Pol_{wu} (^N E; F)$. 
\end{lemma}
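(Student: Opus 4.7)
The plan is to deduce weak uniform continuity of $P\circ T$ on bounded sets from two complementary facts: (i) any compact operator is weak-to-norm uniformly continuous on each bounded subset of its domain, and (ii) any continuous $N$-homogeneous polynomial is norm-Lipschitz on each bounded subset of its domain. Granted these, if $B\subseteq E$ is bounded, then $T(B)$ is bounded in $E$ (indeed relatively norm-compact), so $P$ is Lipschitz on $T(B)$; composing with the weak-to-norm uniform continuity of $T$ on $B$ gives exactly what the definition of $\Pol_{wu}(^N E; F)$ requires.

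First I would verify (i) via Schauder's theorem: as $T\in\mathcal{K}(E,E)$, the adjoint $T^*:E^*\to E^*$ is compact, and hence $T^*(B_{E^*})$ is norm-totally bounded. For each $\eps>0$ there is a finite $\eps$-net $\{\varphi_1,\dots,\varphi_k\}\subseteq E^*$ for $T^*(B_{E^*})$. Using $\|Tx-Ty\|=\sup_{f\in B_{E^*}}|T^*f(x-y)|$ together with the triangle inequality against the nearest $\varphi_j$, one obtains, for any bounded set $B\subseteq E$ with $\sup_B\|\cdot\|\leq M$, the estimate
\[
\|Tx-Ty\|\leq 2M\eps+\max_{1\leq j\leq k}|\varphi_j(x-y)|\qquad(x,y\in B),
\]
which is precisely uniform weak-to-norm continuity of $T$ restricted to $B$.

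Next I would invoke (ii): from the polarization formula and a telescoping expansion of $\widecheck{P}(u,\dots,u)-\widecheck{P}(v,\dots,v)$ one gets
\[
\|P(u)-P(v)\|\leq N\,\|\widecheck{P}\|\,\max(\|u\|,\|v\|)^{N-1}\,\|u-v\|,
\]
so $P$ is Lipschitz on every bounded set with a constant depending only on $\|P\|$ and the radius of that set. Applying this on the bounded set $T(B)\subseteq \|T\|M\,\overline{B}_E$, with resulting Lipschitz constant $L$, and then using the estimate from step (i) with $\eps$ small enough that $2ML\eps<\eta/2$, produces $\varphi_1,\dots,\varphi_k\in E^*$ and a $\delta>0$ such that $\max_j|\varphi_j(x-y)|<\delta$ forces $\|P(Tx)-P(Ty)\|<\eta$ for all $x,y\in B$. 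This is exactly the definition of $P\circ T\in\Pol_{wu}(^N E;F)$.

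The only substantive step is (i): it is the sole place where the compactness of $T$ is genuinely used, via Schauder's theorem, to upgrade continuity to uniform continuity on bounded sets by reducing control along the unit ball of $E^*$ to control along finitely many functionals. Everything else is routine polynomial calculus.
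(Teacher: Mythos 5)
Your proof is correct and follows essentially the same decomposition as the paper's: you compose the (norm) uniform continuity of $P$ on bounded sets with the weak-to-norm uniform continuity of the compact operator $T$ on bounded sets. The only difference is that the paper simply cites \cite[Proposition 2.5]{AP} for the latter fact, whereas you reprove it via Schauder's theorem and a finite $\eps$-net in $T^*(\overline{B}_{E^*})$ (and sharpen uniform continuity of $P$ to a Lipschitz estimate on bounded sets) --- this is the standard proof of that cited proposition, so the argument is the same, just made self-contained.
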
 

\begin{proof}
To complete the proof, it is enough to show that $P \circ T$ is weakly uniformly continuous on the open unit ball $B_E$ of $E$. Let $\eps >0$ be given. Noting that $P$ is uniformly continuous on bounded sets of $E$, we can choose $\delta > 0$ such that if $u, v \in \| T\| \overline{B}_E$ satisfies $\|u - v \| < \delta$ then $\| P(u) - P(v) \| < \eps$. On the other hand, by \cite[Proposition 2.5]{AP}, $T$ is weakly uniformly continuous on bounded subsets of $E$. Thus, we can find $x_1^*, \ldots, x_m^* \in E^*$ and $\eta >0$ such that if $x, y \in B_E$ with $|x_i^* ( x - y) | < \eta$ for all $i=1,\ldots, m$, then $\| T(x) - T(y) \| < \delta$; hence $\|P(T(x)) - P(T(y))\| < \eps$. This shows that $P \circ T$ is weakly uniformly continuous on $B_E$. 
\end{proof}

\begin{theorem}\label{thm:gen_Farmer_1}
Let $E$ be a reflexive Banach space. Consider the following statements.
\begin{enumerate}[label=(\alph*)] 
\setlength\itemsep{0.4em}
\item $E^*$ has an unconditional spreading model built on a weakly null sequence with an upper $p$-estimate for some $p > 1$. 
\item The $\sigma (\mathcal{A} (B_E)^*, \mathcal{A} (B_E)^{**})$-topology does not agree with the weak topology on $r B_E$ for $r < 1$. 
\item The $\sigma (\mathcal{A}_{wu} (B_E)^*, \mathcal{A}_{wu} (B_E)^{**})$-topology does not agree with the weak topology on $r B_E$ for $r < 1$.  
\item There is an $N$-homogenous polynomial on $E$ which is not weakly sequentially continuous. 
\end{enumerate} 
Then we always have that (a) $\Rightarrow$ (b) $\Rightarrow$ (c) $\Rightarrow$ (d). If, in addition, $E$ has the CAP, then (d) $\Rightarrow$  (c) and (d) is equivalent to that $E$ is not $\mathcal{P}_N$-reflexive. And if $E$ has the AP, then (d) $\Rightarrow$ (b). 
\end{theorem}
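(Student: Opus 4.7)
The plan is to establish each implication separately. The universally-valid chain (a) $\Rightarrow$ (b) $\Rightarrow$ (c) $\Rightarrow$ (d) will follow from Farmer's spreading-model machinery combined with some bookkeeping about biduals, while the converses under CAP or AP are obtained through Lemma \ref{lem:unif_conti_poly} together with a weak$^*$-cluster-point construction in the appropriate bidual. For (a) $\Rightarrow$ (b) I would simply reproduce Farmer's original argument from \cite[Theorem 4.1]{Farmer94}, which uses no approximation hypothesis: the upper $p$-estimate of an unconditional spreading model on $E^*$ manufactures a sequence witnessing that $\sigma(\mathcal{A}(B_E)^*,\mathcal{A}(B_E)^{**})$ strictly refines the weak topology on some $rB_E$. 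For (b) $\Rightarrow$ (c), the key observation is that $\mathcal{A}(B_E) \subseteq \mathcal{A}_{wu}(B_E)$ isometrically, since every finite-type polynomial is weakly uniformly continuous on bounded sets; dualising the Hahn--Banach restriction surjection $\mathcal{A}_{wu}(B_E)^* \twoheadrightarrow \mathcal{A}(B_E)^*$ produces an injection $\mathcal{A}(B_E)^{**} \hookrightarrow \mathcal{A}_{wu}(B_E)^{**}$ that is compatible with the evaluation map $\iota$, so the $\sigma(\mathcal{A}_{wu}(B_E)^*,\mathcal{A}_{wu}(B_E)^{**})$-topology on $rB_E$ is at least as fine as the $\sigma(\mathcal{A}(B_E)^*,\mathcal{A}(B_E)^{**})$-topology, whence disagreement of the coarser with the weak topology propagates to the finer.

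For (c) $\Rightarrow$ (d), disagreement of topologies on $rB_E$ produces some $\Phi \in \mathcal{A}_{wu}(B_E)^{**}$ whose pullback $\Phi \circ \iota : B_E \to \mathbb{C}$ is not weakly continuous on $rB_E$ (otherwise every coordinate functional would be weakly continuous and the finer topology would reduce to the weak one). The bounded function $\Phi \circ \iota$ satisfies $\sup_{B_E}|\Phi \circ \iota| \leq \|\Phi\|$ and admits a decomposition into homogeneous polynomial pieces $P_N \in \Pol(^N E)$ (obtained as Cauchy-type integrals of $\Phi \circ \iota$ over circles $\lambda \mapsto \Phi(\iota(\lambda x))$), with $\|P_N\| \leq \|\Phi\|$ so that $\sum_N P_N$ converges uniformly on $r'B_E$ for every $r' < 1$. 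If every $P_N$ were weakly sequentially continuous, then by \cite[Proposition 2.12 and Theorem 2.9]{AHV} (using that reflexive $E$ contains no copy of $\ell_1$) each $P_N$ would in fact be weakly uniformly continuous on bounded sets, so weakly continuous on $rB_E$; uniform convergence of the series on $rB_E$ would transfer weak continuity to the sum, contradicting the choice of $\Phi$. Hence some $P_N$ fails to be weakly sequentially continuous.

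For the converses, assume (d) and fix $P \in \Pol(^N E)$ that is not weakly sequentially continuous; using the homogeneity of $P$ together with reflexivity of $E$, one rescales to arrange that $P$ is not weakly continuous on $rB_E$ for some $r < 1$. For (d) $\Rightarrow$ (c) under CAP, choose a uniformly norm-bounded net $(K_\alpha) \subseteq \mathcal{K}(E,E)$ of finite-rank compact operators with $K_\alpha \to \id_E$ strongly; Lemma \ref{lem:unif_conti_poly} places each $P \circ K_\alpha$ in $\Pol_{wu}(^N E) \subseteq \mathcal{A}_{wu}(B_E)$, and Banach--Alaoglu applied to this norm-bounded net in $\mathcal{A}_{wu}(B_E)^{**}$ extracts a weak$^*$-cluster point $\Phi$. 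Evaluating at $\iota(x) \in \mathcal{A}_{wu}(B_E)^*$ and using the full-net convergence $P(K_\alpha x) \to P(x)$ (which follows from $K_\alpha x \to x$ in norm and norm-continuity of $P$), one obtains $\Phi \circ \iota = P$ pointwise on $E$; hence, if $\sigma(\mathcal{A}_{wu}(B_E)^*,\mathcal{A}_{wu}(B_E)^{**})$ agreed with the weak topology on $rB_E$, then $P$ would be weakly continuous there, contradicting the choice of $P$. The argument for (d) $\Rightarrow$ (b) under AP is identical with $\mathcal{A}(B_E)$ replacing $\mathcal{A}_{wu}(B_E)$ and finite-rank operators $S_\alpha$ replacing compact $K_\alpha$, because $P \circ S_\alpha$ is a finite-type polynomial and so already lies in $\mathcal{A}(B_E)$. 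The equivalence of (d) with failure of $\mathcal{P}_N$-reflexivity under CAP is then immediate from Remark \ref{rem:wsc_continuity}, which identifies $\Pol(^N E) = \Pol_{wsc}(^N E)$ with reflexivity of $\Pol(^N E)$.

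The principal obstacle is the verification that $\Phi \circ \iota = P$ on all of $E$ in the cluster-point construction: a priori $\Phi$ is only a weak$^*$-cluster point and hence is determined through subnet limits, so $\Phi(\iota(x))$ is ambiguous. The resolution is that the full net $\bigl(P(K_\alpha x)\bigr)$ already converges to $P(x)$ for each fixed $x$ (no subnet needed), which forces every subnet cluster value at $\iota(x)$ to coincide with $P(x)$. A secondary subtlety, in (c) $\Rightarrow$ (d), is the extraction of an $N$-homogeneous polynomial from the abstract element $\Phi \in \mathcal{A}_{wu}(B_E)^{**}$ via the Cauchy-type decomposition, together with transferring weak continuity between the individual coefficients and the sum using uniform convergence on $rB_E$.
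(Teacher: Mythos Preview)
Your outline follows essentially the same route as the paper: (a)$\Rightarrow$(b) and (d)$\Rightarrow$(b) are deferred to Farmer, (b)$\Rightarrow$(c) uses the bidual inclusion induced by $\mathcal{A}(B_E)\hookrightarrow\mathcal{A}_{wu}(B_E)$, (c)$\Rightarrow$(d) is the contrapositive via the Taylor expansion of $\Phi\circ\iota\in\Ho^\infty(B_E)$ together with \cite[Proposition~2.12]{AHV}, and (d)$\Rightarrow$(c) builds a bidual element as a weak$^*$ cluster point of $\{P\circ T_\alpha\}$ using Lemma~\ref{lem:unif_conti_poly}. The equivalence with failure of $\mathcal{P}_N$-reflexivity via Remark~\ref{rem:wsc_continuity} is also the paper's route.

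Two points in the (d)$\Rightarrow$(c) step deserve attention. First, under CAP the approximating operators are merely \emph{compact}, not finite-rank; the phrase ``finite-rank compact operators'' is a slip (finite rank would be AP). Second, and more substantively, CAP by itself does \emph{not} furnish a uniformly norm-bounded net $(K_\alpha)$ converging strongly to the identity; you need this boundedness so that $\{P\circ K_\alpha\}$ sits in a ball of $\mathcal{A}_{wu}(B_E)^{**}$ before invoking Banach--Alaoglu. The paper handles this by citing \cite[Proposition~1 and Remark~1]{CJ85}, which for a \emph{reflexive} space with CAP produces compact operators $T_n$ with $\|T_n\|\leq 1$ approximating any prescribed finite set. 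The paper then works only with the specific weakly null sequence $(x_i)$ witnessing failure of weak sequential continuity, and checks $F(\delta_{x_k})=P(x_k)$ just on that sequence---enough to contradict agreement of the two topologies. Your more global claim $\Phi\circ\iota=P$ on all of $B_E$ is correct once the boundedness is in place (and indeed the full-net argument you give is clean), but you should supply the reference to \cite{CJ85} or an equivalent source for the norm control.
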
 

\begin{proof}
(a) $\Rightarrow$ (b), and (d) $\Rightarrow$ (b) when $E$ has the AP are proved in \cite[Theorem 4.1]{Farmer94}. 

(b) $\Rightarrow$ (c): Assume to the contrary that $\sigma (\mathcal{A}_{wu} (B_E)^*, \mathcal{A}_{wu} (B_E)^{**})$-topology agrees with the weak topology on $r B_E$ for $r < 1$. Let $\iota : \mathcal{A} (B_E) \hookrightarrow \mathcal{A}_{wu} (B_E)$ be the natural inclusion map. Suppose that $(x_\alpha)$ is a net in $r B_E$ so that $(x_\alpha)$ converges weakly to some $x_\infty$ in $r B_E$. Let $\varphi \in \mathcal{A} (B_E)^{**}$ be fixed, then $\iota^{**} (\varphi)$ belongs to $\mathcal{A}_{wu} (B_E)^{**}$. 
It follows that $\varphi (x_\alpha) = \iota^{**} (\varphi) (x_\alpha) \rightarrow \iota^{**} (\varphi) (x_\infty) = \varphi (x_\infty)$. This implies that the $\sigma (\mathcal{A} (B_E)^*, \mathcal{A} (B_E)^{**})$-topology also agrees with the weak topology on $r B_E$.

(c) $\Rightarrow$ (d): Assume to the contrary that every homogeneous polynomial on $E$ is weakly sequentially continuous. We claim that if a net $(x_\alpha)$ in $r B_E$ converges weakly to some $x_\infty$ in $r B_E$, then $(x_\alpha)$ also tends to $x_\infty$ in the $\sigma (\mathcal{A}_{wu} (B_E)^*, \mathcal{A}_{wu} (B_E)^{**})$-topology. Let $\psi \in \mathcal{A}_{wu} (B_E)^{**}$ be fixed and consider its restriction $\psi \vert_{B_E}$ to $B_E \,\,(\subset \mathcal{A}_{wu} (B_E)^*)$. Then $\psi \vert_{B_E}$ can be seen as an element of $\Ho^\infty (B_E)$. Thus, we can write its Talyor series on $B_E$: 
\[
\psi \vert_{B_E} = \sum_{N=1}^\infty P_N \quad \text{with} \quad \| P_N \| \leq \|\psi \vert_{B_E} \|. 
\]
It is well-known that its partial sums $(\sum_{N=1}^M P_N )_{M=1}^\infty$ converges uniformly to $\psi \vert_{B_E}$ on $r B_E$. Given $\eps >0$, choose $M \in \N$ large enough so that $\| \sum_{N={M+1}}^\infty P_N \|_{r B_E} < \eps/4$. 
As $E$ is reflexive, it does not contain a copy of $\ell_1$; hence our assumption implies that $P_N$ is weakly (uniformly) continuous on bounded sets \cite[Proposition 2.12]{AHV}. It follows that there exists $\alpha_0$ so that $\alpha \geq \alpha_0$ implies that $|P_N (x_{\alpha}) - P_N (x_\infty)| < \eps/(2M)$ for $N=1,\ldots, M$. Now, it follows that 
\[
| \psi \vert_{B_E} (x_\alpha) - \psi \vert_{B_E} (x_\infty) | \leq \left| \sum_{N=1}^{M} P_N (x_\alpha) - \sum_{N=1}^{M} P_N (x_\infty) \right| + \frac{\eps}{2} < \eps
\]
for all $\alpha \geq \alpha_0$. 

(d) $\Rightarrow$ (c): Suppose that $E$ has the CAP. Let $P \in \Pol(^N E)$ which is not weakly sequentially continuous at the origin. Choose a sequence $(x_i)$ converging weakly to zero with $|P(x_i)| \geq \eps > 0$ for all $i \in \N$. Pick any sequence $(\eps_i)$ of positive real numbers tending to zero. By \cite[Proposition 1 and Remark 1]{CJ85}, there exists $T_n \in \mathcal{K} (E, E)$ with $\|T_n \| \leq 1$ be such that $\| T_n (x_i) - x_i \| < \eps_n$ for every $i =1,\ldots, n$, for each $n \in \N$. By Lemma \ref{lem:unif_conti_poly}, we have that $P \circ T_n$ belongs to $\A_{wu} (B_E)$. Let $F$ be a weak$^*$ limit of the set $\{ P \circ T_n \}_{n \in \N}$ in $\A_{wu} (B_E)^{**}$. Let us consider the modulus $\eps \mapsto \delta(\eps)$ of the uniform continuity of $P$ on $B_E$, i.e., if $x, y \in B_E$ satisfies $\| x - y \| < \delta(\eps)$, then $| P(x)-P(y) | <\eps$. Given $\delta' > 0$ and $k \in \N$, let us choose $i > k$ so that 
\[
| F(x_k) - P T_i (x_k) | \leq \delta(\eps_i) + \delta'. 
\]
Letting $i \rightarrow\infty$, we have that $|F(x_k) - P(x_k) | \leq \delta'$ for every $k \in \N$. This implies that $F(x_k) \not\rightarrow 0$ as $k \rightarrow \infty$.  
\end{proof}

\proof[Acknowledgements]

The author would like to thank Yun Sung Choi, Sheldon Dantas, Han Ju Lee, Manuel Maestre and Abraham Rueda Zoca for fruitful conversations on the topic of the paper, and wishes to thank Miguel Mart\'in for pointing out an error in the first version of the paper. The author is also indebted to the anonymous referees for their helpful comments and suggestions toward the improvement of the paper.

\end{document}